\newtheorem{theorem}{Theorem}[section]
\newtheorem{lemma}[theorem]{Lemma}
\newtheorem*{lemma*}{Lemma}
\theoremstyle{definition}
\newtheorem{definition}[theorem]{Definition}
\theoremstyle{remark}
\newtheorem{remark}[theorem]{Remark}
\numberwithin{equation}{section}
\newcommand{\A}{\mathbb{A}}
\newcommand{\onto}{\xrightarrow[]{{}_{\!\!\textnormal{onto\,\,\,}\!\!}}}
\newcommand{\into}{\xrightarrow[]{{}_{\!\!\textnormal{into\,\,\,}\!\!}}}
\def\XXint#1#2#3{{\setbox0=\hbox{$#1{#2#3}{\int}$}\vcenter{\hbox{$#2#3$}}\kern-.5\wd0}}
\def\XXiint#1#2#3{{\setbox0=\hbox{$#1{#2#3}{\iint}$}\vcenter{\hbox{$#2#3$}}\kern-.5\wd0}}
\begin{document}

\title[An Essay on the Interpolation Theorem of J\'{o}zef Marcinkiewicz\;\;\;]{An Essay on the Interpolation Theorem of \;\\J\'{o}zef Marcinkiewicz - Polish Patriot}

\author[Iwaniec]{Tadeusz Iwaniec}
\address{Department of Mathematics, Syracuse University, Syracuse,
NY 13244, USA and Department of Mathematics and Statistics,
University of Helsinki, Finland}
\email{tiwaniec@syr.edu}

\thanks{The author was supported by the NSF grant DMS-0800416 and the Academy of Finland project 1128331.}

\subjclass[2000]{Primary 35J60; Secondary 41A05, 47B38}

\date{\today}

\keywords{Marcinkiewicz, Nonlinear Interpolation, $p$-Laplacian }

\maketitle
\begin{center}
\textit{In memory of Polish mathematicians\\murdered by  the Soviets and the Nazis\\}\end{center}

\vskip2cm

\textit{\textbf{Prologue}}

\begin{quote}

"You will have to name it,"  Pierre said to his young wife, in the same tone as if it were a question of choosing a name for little Ir\`{e}ne.\\
The one-time Mlle Sklodovska reflected in silence for a moment. Then, her heart turning toward her own country which had been erased from the map of the world, she wondered vaguely if the scientific event would be published in Russia, Germany and Austria- the oppressor countries-and answered timidly:\\
"Could we call it 'polonium' ? "\\
In the Proceedings of the Academy of Science for July 1898 we read:
" If the existence of this new metal is confirmed we propose to call it \textit{polonium}, from the name of the original country of one of us."
\end{quote}

\begin{flushright} -from the book MADAME CURIE \\A Biography by \`{E}ve Curie. \\The Literary Guild of America,\\ INC. New York  1937  (page 161)
\end{flushright}
\vskip0.2cm
In May, 1921, President Harding presented Maria Sk{\l}odowska with one gram of radium. Later, in 1929, she donated a second gram of radium to help build The Radium Institute in Warsaw (1932).\\

Maria Sk{\l}odowska's life, regardless of her fame,  has immensely inspired fellow generations of scientists in Poland and abroad. J\'{o}zef Marcinkiewicz was about two years old when she was awarded her second Nobel Prize (in chemistry, December 10, 1911).  This year we will celebrate  the one hundredth anniversary of this event.\\

In 1939, following their secret protocol (signed by Molotov and Ribbentrop in August 23, 1939), Adolf Hitler (on September 1) and Joseph Stalin (on September 17) attacked Poland hoping to erase it from the map of the world again.
J\'{o}zef Marcinkiewicz, like Maria Sk{\l}odowska forty years before, stood up for his beloved country. In August 1939, when the Second World War was imminent, he came back from London to Wilno (Vilnius). He put on  Polish military uniform to say no to the Nazis and the Bolsheviks.   \\
 \begin{quote}"As a patriot and son of my homeland I would never attempt to refuse the service to the country in such difficult time as war".
\end{quote}
 \vskip0.2cm
 \begin{flushright}  - fragment of a letter of J\'{o}zef Marcinkiewicz\\to his adviser Antoni Zygmund, see \cite{Mal}
 \end{flushright}

Marcinkiewicz, along with  22 thousand Polish army officers, police members, land owners -great patriots who dared to exhibit a love and pride of independent Poland, were executed by NKVD murderers. By the order of J. Stalin, they were shot in the back of the head and buried secretly in mass graves of gloomy forested sites near Starobielsk, Ostashkovo and the most documented Katy\'{n}.

\begin{quote}
 \textit{KATY\'{N}  CAROL}

\textit{Someday maybe a great musician will rise up,\\
will transform speechless rows of gravestones into a keyboard,\\
a great Polish song writer will compose a frightening ballad with blood and tears.\\
$[...]$\\
And there will emerge untold stories,\\
strange hearts, bodies bathed in light...\\
And the Truth again will embody\\
The Spirit\\
with living words-of the sand of Katy\'{n}} \\

\end{quote}
\begin{flushright} - \textit{Kazimiera I{\l}{\l}akowicz\'{o}wna} \\
(translated by the author of this article)
\end{flushright}

\vskip0.5cm

\begin{quote}
"...his early death may be seen as a great blow to Polish Mathematics, and probably its heaviest individual loss during the Second World War".\\
\end{quote}
\begin{flushright} - remark by Professor Antoni Zygmund \\ about his pupil J\'{o}zef Marcinkiewicz \cite{DH, M2, Z2}
 \end{flushright}
\vskip0.2cm
Good scientific communities, like families, honor the memory of their eminent members, great patriots and martyrs.
I devote my essay to the memory of J\'{o}zef Marcinkiewicz  and all Polish mathematicians whose glorious scientific careers had come to a cruel end during Nazi-Soviet occupation.  J\'{o}zef Marcinkiewicz, Stanis{\l}aw Saks and  Juliusz Pawe{\l} Schauder were inspiration to me. I am mindful of them not only as mathematicians.\\

\textbf{\textit{Stanis{\l}aw Saks}} (1897-1942) was born to a Polish-Jewish family. He joined the Polish underground, was arrested and executed in November 23, 1942, by the German Gestapo in Warsaw.\\

Words written on the wall of a cell by an anonymous prisoner of the Gestapo in Aleja Szucha in Warsaw can be translated as saying:

\begin{flushright}\textit{Speaking of Poland is easy}\\
\textit{working for it is harder}\\
\textit{dying is harder still}\\
\textit{but suffering is the hardest.}
\end{flushright}
\vskip0.2cm
\textbf{\textit{Juliusz Pawe{\l} Schauder }}(1899 -1943), a Polish mathematician of Jewish origin who was shot in Lw\'{o}w by the Gestapo in September 1943. Immediately after that,  his wife Emilia and her daughter Ewa were hiding in the sewers. They eventually surrendered to the Gestapo. Transported to the concentration camp in Lublin, Emilia died, her daughter Ewa survived the camp.  \\

In August 1944 the staff of The Radium Institute in Warsaw  suffered the same fate as the victims of the Katy\'{n} massacre, they were executed by a shot in the back of the head. After the Second World War The Institute was  named  "Maria Sk{\l}odowska-Curie Institute of Oncology".\\

\textit{Acknowledgement}. I feel highly honored by the invitation to speak at the \textbf{\textit{J\'{o}zef Marcinkiewicz Centenary Conference}} in Pozna\'{n}, the city of the poet Kazimiera I{\l}{\l}akowicz\'{o}wna, and the city where Marcinkiewicz got his last, unrealized, offer to work (August 1939).\\
 \begin{flushright}\textbf{\textit{Heart-felt thanks to the organizers.}}
\end{flushright}

\vskip1cm

 \tableofcontents

\section{Introduction}

The total record of accomplishments of Marcinkiewicz in his short life, his talent, perceptions rich in concepts, and technical novelties, go far beyond my ability to give full play within the confines of one article.  The importance of Marcinkiewicz's short paper \cite{M1}, see also \cite {Z1}, is reflected in the myriad applications and generalizations \cite{BS, BL, C1, C2, CCRSW, CN, KPS, St, Z1} ; which earns the right to be called
$$Marcinkiewicz \;\;Interpolation \;\;Theory$$
Marcinkiewicz interpolation theorem came after the celebrated  convexity theorem of M. Riesz \cite{R} and his student G.O. Thorin \cite{T} . These fundamental works by M. Riesz, G.O. Thorin and J. Marcinkiewicz  deal with  estimates of  the $\,\mathscr L^p$-norms of an operator knowing its behavior at the end-points of the interval of the exponents $\,p\,$, where the operator is still defined.   There are, however, some subtle differences between the Riesz-Thorin and the Marcinkiewicz ideas.  Marcinkiewicz approach can be adapted to nonlinear operators, this is what we are going to demonstrate in the present paper. On the other hand, the very elegant idea of complex interpolation by G.O. Thorin \cite{T} has found numerous applications, especially when dealing with sharp inequalities for singular integrals.  The interested reader may wish to take a note of the interpolation lemmas in  \cite{AIPS}.\\
In the present paper I will try to elucidate some new advances of Marcinkiewicz  interpolation theorem which arise from a study of the nonlinear $\,p$-harmonic type PDEs,  \cite{DI1,DI2, DI3, GIS, I1, I2, I3}.  The principal result in this paper can be described as follows: \\

Let $\,(\mathbb X, \textnormal d x)\,$ be $\,\sigma$-finite measure space, $\,\mathscr L^2(\mathbb X,\,\mathbb V)\,$ the
space of square integrable functions valued in a finite dimensional inner product space $\,\mathbb V\,,$  and $\,\mathscr L_{_+}^2(\mathbb X,\,\mathbb V)\,$ a closed subspace of $\,\mathscr L^2(\mathbb X,\,\mathbb V)\,$. We study the orthogonal projection
$$
  \Pi\,;\; \mathscr L^2(\mathbb X,\,\mathbb V)\,\onto \,\mathscr L_{_+}^2(\mathbb X,\,\mathbb V)\,
$$
The standing assumption about the subspace $\,\mathscr L_{_+}^2(\mathbb X,\, \mathbb V)\,$ is that $\,\Pi\,$ extends as a continuous linear operator,

$$
  \Pi\,;\; \mathscr L^s(\mathbb X,\,\mathbb V)\,\onto \,\mathscr L_{_+}^s(\mathbb X,\,\mathbb V)\,
$$
 to some range of exponents $\, s \in (s_1 \,,\, s_\infty\,)\,$, where $\,1 \leqslant s_1 < 2 < s_\infty \leqslant \infty\,$. For example, the $\,\mathscr L^2$ -projection of vector fields in $\,\mathbb R^n\,$ onto gradient fields is represented by the Riesz transforms, so $\,s_1 = 1\,$ and $\,s_\infty = \infty$.\\
 Now, choose and fix $\, p \in (\,s_1, s_\infty)\,$. The $\,\mathscr L^p$ -projection of $\,\mathfrak f \in \mathscr L^p(\mathbb X, \mathbb V)\,$ onto the space $\, \mathscr L_{_+}^p(\mathbb X, \mathbb V)\,$ is defined to be a function   $\, \alpha \in \mathscr L^p(\mathbb X, \mathbb V)\,$ that is the closest, in $\,\mathscr L^p\,$ -distance, to $\,\mathfrak f\,$. This gives rise to a nonlinear continuous operator
 $$
  \mathfrak R_p \,;\; \mathscr L^p(\mathbb X,\,\mathbb V)\,\onto \,\mathscr L_{_+}^p(\mathbb X,\,\mathbb V)\,
$$

This operator can be viewed as $\,p$-harmonic variant of the Riesz transforms. The space $\,\mathscr L^p(\mathbb X,\,\mathbb V)\,$ is the natural domain of definition of $\, \mathfrak R_p\,$. However, we are interested in the action of $\,\mathfrak R_p\,$ on spaces different from $\,\mathscr L^p(\mathbb X,\,\,\mathbb V)\,$; namely,
$$
  \mathfrak R_p \,;\; \mathscr L^s(\mathbb X,\,\mathbb V)\,\onto \,\mathscr L_{_+}^s(\mathbb X,\,\mathbb V)\,, \;\;\;\textnormal {with some exponents }\;\;\; s \neq p
$$
 \begin{theorem}[Interpolation of the $\mathscr L^p$-projections] \label{0}
Suppose
 $$
  \mathfrak R_p \,;\; \mathscr L^{\,r_i} (\mathbb X,\,\mathbb V)\,\onto \,\mathscr L_{\textnormal{weak}}^{\,r_i}(\mathbb X,\,\mathbb V)\,,\; i\in\{1, 2\}\, , \;\;\textnormal {where}\;\;s_1 \leqslant r_1 < r_2 \leqslant s_\infty
$$
This means that for each $\,i\in\{1, 2\}\,$ there is a constant $\, C_i\,$ such that
$$
  \textnormal{meas} \,\{\,x \in \mathbb X\,;\; |\, \mathfrak R_p\mathfrak f (x)\,| \;>\, t \,\} \;\leqslant C_i\, t^{- r_i} \,\int_\mathbb X |\,\mathfrak f(x) \,| ^{r_i} \,\textnormal d x\,,\quad \textnormal{for all}\;\; t > 0
$$
whenever $\,\mathfrak f \in  \mathscr L^{\,p} (\mathbb X,\,\mathbb V) \cap \mathscr L^{\,r_i} (\mathbb X,\,\mathbb V)\,$.\\
Then for every $\,r \in (\,r_1,\, r_2\,)\,$ there exists a constant  $\,C_r\,$ such that
$$
 \int_\mathbb X |\, \mathfrak R_p\mathfrak f(x)\,|^r \, \textnormal d x \; \leqslant  \; C_r\,\int_\mathbb X |\,\mathfrak f(x)\,|^r\,\textnormal d x
$$
whenever $\,\mathfrak f \in  \mathscr L^{\,p} (\mathbb X,\,\mathbb V) \cap \mathscr L^{\,r} (\mathbb X,\,\mathbb V)\,$.
\end{theorem}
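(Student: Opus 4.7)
The plan is to adapt the classical layer-cake proof of Marcinkiewicz's theorem to the nonlinear setting. Starting from
$$
\int_\X |\mathfrak{R}_p \mathfrak{f}|^r\,\dtext x \;=\; r\int_0^\infty t^{r-1}\,\mu(t)\,\dtext t, \qquad \mu(t) := \textnormal{meas}\{|\mathfrak{R}_p\mathfrak{f}|>t\},
$$
I would seek a distributional inequality of the form
$$
\mu(t) \;\le\; A_1\,t^{-r_1}\!\int_{|\mathfrak{f}|>\sigma t}\!|\mathfrak{f}|^{r_1}\,\dtext x \;+\; A_2\,t^{-r_2}\!\int_{|\mathfrak{f}|\le \sigma t}\!|\mathfrak{f}|^{r_2}\,\dtext x,
$$
for a parameter $\sigma>0$ to be optimized. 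Inserting this into the layer-cake identity, applying Fubini, and performing the elementary one-dimensional integrations would then yield $\int|\mathfrak{R}_p\mathfrak{f}|^r\,\dtext x \le C_r\int|\mathfrak{f}|^r\,\dtext x$, with a constant $C_r$ that blows up as $r\to r_1^+$ or $r\to r_2^-$.

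To produce the distributional inequality I would split $\mathfrak{f} = \mathfrak{f}^\sharp + \mathfrak{f}^\flat$ at height $\sigma t$, with $\mathfrak{f}^\sharp = \mathfrak{f}\,\chi_{\{|\mathfrak{f}|>\sigma t\}}$. The classical proof concludes by sublinearity of the operator; in its absence, I would lean on the one piece of algebraic structure available, namely the equivariance
$$
\mathfrak{R}_p(\mathfrak{f} + \alpha) \;=\; \mathfrak{R}_p(\mathfrak{f}) + \alpha, \qquad \alpha \in \mathscr{L}^p_+,
$$
which is immediate from the extremal definition of $\mathfrak{R}_p$. Taking $\alpha = \mathfrak{R}_p \mathfrak{f}^\sharp$ gives the decomposition
$$
\mathfrak{R}_p\mathfrak{f} \;=\; \mathfrak{R}_p\mathfrak{f}^\sharp \;+\; \mathfrak{R}_p\!\bigl(\mathfrak{f}^\flat + (\mathfrak{f}^\sharp - \mathfrak{R}_p\mathfrak{f}^\sharp)\bigr).
$$
The first summand is handled by the weak-$(r_1,r_1)$ hypothesis applied to $\mathfrak{f}^\sharp$, which is supported where $|\mathfrak{f}|>\sigma t$ and produces precisely the first term of the desired inequality.

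The main obstacle is the second summand. To bound it by the weak-$(r_2,r_2)$ hypothesis one needs its argument in $\mathscr L^{r_2}$, whereas the nonlinear residual $\mathfrak{f}^\sharp - \mathfrak{R}_p \mathfrak{f}^\sharp$ is a priori only in $\mathscr L^{r_1}_{\textnormal{weak}}$. To bridge this gap I would appeal to the Euler--Lagrange characterization
$$
\Pi\!\left(|\mathfrak{f}^\sharp - \mathfrak{R}_p\mathfrak{f}^\sharp|^{p-2}(\mathfrak{f}^\sharp - \mathfrak{R}_p\mathfrak{f}^\sharp)\right) \;=\; 0
$$
satisfied by the minimizer, together with the hypothesized boundedness of the \emph{linear} projection $\Pi$ on $\mathscr L^s$ for $s\in(s_1,s_\infty)$; this should allow one to trade distributional mass on the residual for distributional mass on $\mathfrak{f}^\sharp$ itself. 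Should a single decomposition fail to close the estimate, I would iterate the splitting-projection procedure finitely many times, driving the nonlinear tail below any prescribed threshold so that the summed weak-type hypotheses absorb it. Balancing constants through the free parameter $\sigma$ then completes the argument.
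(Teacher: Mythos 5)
Your equivariance observation $\mathfrak{R}_p(\mathfrak{f}+\alpha)=\mathfrak{R}_p\mathfrak{f}+\alpha$ for $\alpha\in\mathscr L^p_+$ is correct (translating by an element of the subspace just translates the minimizer), and the resulting identity $\mathfrak{R}_p\mathfrak{f}=\mathfrak{R}_p\mathfrak{f}^\sharp+\mathfrak{R}_p\bigl(\mathfrak{f}^\flat+(\mathfrak{f}^\sharp-\mathfrak{R}_p\mathfrak{f}^\sharp)\bigr)$ is a genuine and clever substitute for the additivity one has in the linear case. However, the argument has a real gap at exactly the point you flag. The weak-$(r_2,r_2)$ hypothesis can only be applied to $\mathfrak{R}_p$ acting on data whose $\mathscr L^{r_2}$-norm you control; but the residual $\mathfrak{f}^\sharp-\mathfrak{R}_p\mathfrak{f}^\sharp$ is only known to lie in $\mathscr L^p\cap\mathscr L^{r_1}_{\textnormal{weak}}$, and there is no bound on $\|\mathfrak{f}^\sharp-\mathfrak{R}_p\mathfrak{f}^\sharp\|_{r_2}$ in terms of $\int_{|\mathfrak{f}|\le\sigma t}|\mathfrak{f}|^{r_2}$, which is what the distributional inequality requires. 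The Euler--Lagrange identity $\Pi\bigl(|\mathfrak{f}^\sharp-\mathfrak{R}_p\mathfrak{f}^\sharp|^{p-2}(\mathfrak{f}^\sharp-\mathfrak{R}_p\mathfrak{f}^\sharp)\bigr)=0$ relates the $(p-1)$-power of the residual to the range of $\Pi$; it does not by itself upgrade the integrability of the residual, and you do not show how it would. The proposed iteration is also not an argument: each pass produces a new residual with the same defect, and no mechanism is given that makes the sum of errors converge or that the weak-type constants can absorb.

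The paper closes this gap by a substantively different strategy. It works with a coupled pair of data $(\mathfrak{a},\mathfrak{b})$ and its solution pair $(\alpha,\beta)\in\mathscr L^p_+\times\mathscr L^q_-$ satisfying $\mathfrak{A}(x,\mathfrak{a}+\alpha)=\mathfrak{b}+\beta$, truncates the data at level $t$ exactly as you do, and then \emph{solves the full nonlinear equation for the truncated data} to get $\mathcal H_t=(\mathcal A_t,\mathcal B_t)$. Rather than decomposing the output (the paper explicitly warns that $\phi\neq\phi^t+\phi_t$), it compares $\mathcal H$ to $\mathcal H_t$ pointwise via the monotonicity and Lipschitz conditions on $\mathfrak{A}$, producing the energy integrand $E_t=\langle\,\mathcal A-\mathcal A_t\,|\,\mathcal B-\mathcal B_t\,\rangle\ge 0$ and the two-sided pointwise bound $[\mathcal H-\mathcal H_t]\ll E_t+[\mathcal H_t]$. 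The crucial step is then that $\int_\mathbb X E_t$ is controlled using the $(p,q)$-orthogonality $\int_\mathbb X\langle\alpha-\alpha_t\,|\,\beta-\beta_t\rangle=0$ together with Young's inequality absorption, yielding a bound in terms of $[\mathfrak{f}^t]$ and $[\mathcal H]$ only. The weak-type hypotheses are then applied to the honest truncated solutions $\mathcal H_t$, never to a residual, and the layer-cake/Fubini/H\"older machinery closes the estimate. The essential idea your proposal is missing is precisely this pointwise energy comparison of two solutions via monotonicity, integrated against orthogonality — it is what replaces sublinearity in the nonlinear setting.
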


The proof is immediate from a more general result that we included in Theorem \ref{Marcinkiewicz}, see Section 5 and Section 6.
 \vskip1cm

\section{A Motivation from Hodge Theory}

The Hodge decomposition of differential forms  provokes a nonlinear setting of Marcinkiewicz Interpolation.
\subsection{The linear Hodge theory}

Let $\mathbb X$ be an oriented  Riemannian $n$-dimensional smooth manifold (with or without boundary)  \cite{Du, I5, ISS, Mor, Sc}. To every point $\,x \in \mathbb X $ and  $ 0 \leqslant \ell \leqslant n $ there corresponds a linear space $\,\bigwedge^\ell_x\,$ of $\ell$-covectors.  This is an $\, n \choose  \ell $ - dimensional inner product space:

$$
 \langle\,\alpha \,|\, \beta\,\rangle \,\textnormal d x= \langle\,\alpha \,|\, \beta\,\rangle _{_x} \,\textnormal d x =  \alpha \wedge \ast \beta
$$
where  $\textnormal d x$ stands for the unit $n$-covector induced by the metric and orientation of the manifold, and $\,\ast\, ; \;\bigwedge^\ell_x\, \rightarrow \bigwedge^{n-\ell} _x\,$ is the Hodge-star duality operator. The volume element $\textnormal d x$ gives rise to a measure on $\,\mathbb X\,$.

Let $\,\bigwedge^\ell\, = \,\bigwedge^\ell(\mathbb X) = \bigcup _{x \in \mathbb X} \bigwedge^\ell_x\,$ denote the boundle of $\ell$- covectors over $\mathbb X$. One might consider various classes of sections of this bundle; that is, differential forms. The class of smooth $\ell$-forms will be denoted by $\mathscr C^\infty (\mathbb X , \wedge^\ell) $. There are two underlying differential operators acting on forms: the exterior differential   and its formal adjoint, called Hodge codifferential
$$ \textnormal d \,; \mathscr C^\infty (\mathbb X , \wedge^{\ell -1}) \rightarrow \mathscr C^\infty (\mathbb X , \wedge^\ell)\;, \quad \quad  \textnormal d^ \ast ; \mathscr C^\infty (\mathbb X , \wedge^{\ell +1}) \rightarrow \mathscr C^\infty (\mathbb X , \wedge^\ell)$$
The Hodge theory asserts that every differential $\,\ell$-form $\,\omega \in \mathscr C^\infty (\mathbb X , \wedge^\ell)\, $ can be written as
$$
  \omega  = \textnormal d u \, + \, \textnormal d^*  v  +  \, h \,, \quad   u \in \mathscr C^\infty (\mathbb X , \wedge^{\ell -1})\,, \quad v \in \mathscr C^\infty (\mathbb X , \wedge^{\ell +1})\,, \quad    h \in \mathscr C^\infty (\mathbb X , \wedge^{\ell})
$$
The exact component $\, \textnormal d u \,$, the coexact component $\, \textnormal d^* v  \,$, and the harmonic field $\,h\,,\; \textnormal d h = \textnormal d^* h = 0 \,$, are determined uniquely once we impose suitable boundary conditions on  $\,u\,, v\,, h\,$ (no conditions are required for compact manifolds without boundary), see \cite{Du, I5, ISS, Mor}. Under such boundary conditions these components are mutually orthogonal in the space
$$
 \mathscr L^2 ( \mathbb X, \wedge ^\ell) = \{ \,\,\omega \,; \;\|\omega \|_{_2}^2 = \int_\mathbb X \omega \wedge \ast \omega  = \int _\mathbb X \langle \,\omega(x)\,|\,\omega(x) \,\rangle\,\textnormal d x \, < \infty\,\}
$$
In fact this is the space where the Hodge decomposition theory is born. Let us look briefly at the following two closed subspaces of  $\,\mathscr L^2 ( \mathbb X, \wedge ^\ell)\,$; those that consist of the exact and coclosed forms, respectively:
$$
 \mathscr L^2_{_+} (\mathbb X\,,\wedge^\ell) = \mathscr L^2-\textnormal{closure of the forms }\;\textnormal d u\,, \;\textnormal{\;with }\;  u \in \mathscr C^\infty _\circ(\mathbb X\,,\wedge ^{\ell-1}\,)
$$
$$
 \mathscr L^2_{_-} (\mathbb X\,,\wedge ^\ell) = \mathscr L^2-\textnormal{closure of the forms }\; \beta \in \mathscr C^\infty (\mathbb X\,, \wedge ^\ell)\,\, \textnormal{such that }\;\textnormal d^* \beta = 0
$$
Thus we have an orthogonal decomposition
$$
\mathscr L^2 ( \mathbb X, \wedge ^\ell)\, =  \mathscr L^2_{_+} (\mathbb X\,,\wedge^\ell)\; \oplus\;  \mathscr L^2_{_-} (\mathbb X\,,\wedge ^\ell)
$$
Let
$$
\mathbf E \;;\;  \mathscr L^2 ( \mathbb X, \wedge ^\ell)\, \rightarrow \mathscr L^2_{_+} (\mathbb X\,,\wedge^\ell)\;\quad\textnormal{and}\quad
\mathbf C \;;\;  \mathscr L^2 ( \mathbb X, \wedge ^\ell)\, \rightarrow  \mathscr L^2_{_-} (\mathbb X\,,\wedge^\ell)
$$
denote the orthogonal projections. These operators are locally represented by singular integrals (Riesz Transforms) and as such keep acting   as  continuous operators on every $\,\mathscr L ^s$ -space, with $ \,1 < s < \infty $, \cite{I5, ISS, Mor, Sc}
$$
\mathbf E \;;\;  \mathscr L^s ( \mathbb X, \wedge ^\ell)\, \rightarrow \;\mathscr L^s_{_+} (\mathbb X\,,\wedge^\ell)\quad\textnormal{and}\quad
\mathbf C \;;\;  \mathscr L^s ( \mathbb X, \wedge ^\ell)\, \rightarrow  \mathscr L^s_{_-} (\mathbb X\,,\wedge^\ell)
$$
Let us record the following generalization of the orthogonality  of  exact and coclosed forms
$$
 \int _\mathbb X \langle\, \alpha(x)\,|\,\beta(x)\,\rangle\,\textnormal d x \;=\;0\;,  \quad\quad\textnormal{whenever}\;\; \alpha \in \mathscr L^p_{_+} ( \mathbb X, \wedge ^\ell)\,\quad\textnormal{and}\;\;\beta \in \mathscr L^q_{_-} ( \mathbb X, \wedge ^\ell)\,
$$
Hereafter $\,p\,$ and $\,q\,$ are H\"{o}lder conjugate exponents; that is, real numbers in the interval $ (1\,, \, \infty \,) $ that satisfy  the H\"{o}lder relation $\,p + q = p\cdot q \,$.

These projection operators also act on $ \, \mathscr L^1 ( \mathbb X, \wedge ^\ell)\,$ with values, respectively,  in exact and coclosed forms of the Marcinkiewicz class  $\,\mathscr L^1_{\textnormal{weak}} ( \mathbb X, \wedge ^\ell)\,$.

\subsection{The $\mathscr L^p$- projection}
 The quintessential problem  is to find an exact  differential form $\, \alpha \in \mathscr L^p_{_+} ( \mathbb X, \wedge ^\ell)\, , \,p \neq \,2\,$,  which is the nearest possible in the $\mathscr L^p$-distance to a given form $\,\omega \in \mathscr L^p ( \mathbb X, \wedge ^\ell)\,\,$. In the pursuit of the solution we minimize the $\,p$-harmonic energy integral
 $$
     \min_{\gamma \in \mathscr L^p_{_+} ( \mathbb X,\, \wedge ^\ell)}\int_\mathbb X \,| \omega(x) - \gamma (x) \,|^p \;\textnormal d x =
     \int_\mathbb X \,|\, \omega(x) - \alpha (x) \,|^p \;\textnormal d x
 $$
Such form $\alpha \in \mathscr L^p_{_+} (\mathbb X, \wedge ^\ell)$ solves  the Euler-Lagrange equation
\begin{equation}
 \textnormal d^* |\omega - \alpha | ^{p-2} (\omega - \alpha )   =  0 \,,
\end{equation}
Equivalently,
\begin{equation}
 |\omega - \alpha | ^{p-2} (\omega - \alpha ) {\overset{{}_{\textnormal{\tiny{def}}}}{\;=\!\!=\;}} \beta  \in \mathscr L^q_{_-} (\mathbb X, \wedge ^\ell)
\end{equation}
Here, we are dealing with H\"{o}lder conjugate exponents,  $\, p + q = p\,\cdot\, q \,$,  and the Hodge codifferential $\,\textnormal d^* \,; \mathscr L^q (\mathbb X\,, \wedge ^\ell) \,\rightarrow  \mathscr D\,' (\mathbb X\,, \wedge ^{\ell - 1 }) \;$ acting in the sense of distributions. Now, by the very definition, the $\,\mathscr L^p$-projection onto exact forms is a nonlinear operator which takes $\,\omega\,$ into $\,\alpha\,$,
\begin{equation}
 \mathbf E^p \;;\;\mathscr L^p (\mathbb X\,, \wedge ^\ell)\; \onto  \;  \mathscr L^p_{_+} (\mathbb X\,, \wedge ^\ell)\;,  \quad \mathbf E^p \omega = \alpha
\end{equation}
It is plain that the operator $\,\mathbf E^p \,$ is bounded,
\begin{equation}
 \|\,\mathbf E^p\omega\|_p \; \leqslant \| \omega \|_p
\end{equation}
But it is less obvious, because of nonlinearity, that $\,\mathbf E^p\,$ is also continuous; precisely, we have:
\begin{equation}
\int_\mathbb X |\, \mathbf E^p \omega_1\;-\; \mathbf E^p\omega_2\,|^p \;\ll \Big( \int_\mathbb X |\omega _1\,-\, \omega_2 \,|^p \Big )^{\theta}\,\cdot\, \Big( \int_\mathbb X |\omega _1\,|^p\; +\, |\,\omega_2 \,|^p \Big )^{1 -\theta}
\end{equation}
for some exponent $\,0 < \theta = \theta(p) \leqslant 1\,$.
Throughout this text we shall make use of the symbol $ \,\ll\,$ to indicate that the inequality holds with some positive constant, so-called \textit{implied constant},  in front of the expression after this symbol. The implied constants will vary from line to line; their detailed dependence on the exponents and other parameters can easily be perceived from the computation. The implied constants will never depend on the functions of concern.\\
While the space $\,\mathscr L^p (\mathbb X\,, \wedge ^\ell)\,$ is considered the natural domain of definition of the operator $\,\mathbf E^p\,$ we shall depart from this space and move into the realm of exponents different from $\,p$. A wider and unifying framework will be set up for such operators to capture the essence of Marcinkiewicz  interpolation.

\section{Setting the Stage}
From now on $\,(\mathbb X\,, \textnormal d x\,)\,$ will be an arbitrary $\,\sigma$-finite measure space and  $\,\mathbb V\,$ a finite dimensional vector space equipped with an inner product $\, \langle\;\cdot|\:\cdot \rangle =  \langle\;\cdot|\: \cdot\rangle_{_\mathbb V}\;$ and the induced norm $\,|\cdot | = |\cdot |_{_\mathbb V} = \langle\;\cdot|\: \cdot\rangle^{1/2}$. We shall discuss $\,\mathscr L^s$-spaces, $\,1\leqslant s < \infty $,  of $\,\textnormal d x$-measurable functions on $\mathbb X\,$ valued in $\mathbb V\,$,

$$
\mathscr L^s(\mathbb X\,, \mathbb V) \;=\; \Big \{\,\omega \;;\; \|\,\omega\,\|_{_s} \;=\; \Big (\int_\mathbb X |\,\omega(x)\,|_{_\mathbb V} ^s \;\textnormal d x \;\Big )^{\frac{1}{s}}  <\infty \;\Big \}
$$
\begin{remark}
The observant reader may wish to note that our subsequent considerations remain valid for functions in $\,\mathbb X\,$ whose values $\,\omega(x)\,$  lie in a given inner product space $\,\mathbb V_x\,$  assigned to each point $\, x \in \mathbb X\,$, like differential forms on Riemannian manifolds. Rigorous treatment of this setting, however, would require introduction of relevant vector bundles over $\,\mathbb X\,$, so that $\,\omega \,$ would become a measurable section. For the sake of readability, we do not enter this territory; instead, we  confine ourselves to trivial bundle $\,\mathbb X \times \mathbb V\,$.
\end{remark}
\subsection{The $\mathscr L^2$-projections}
Let us choose and fix a closed subspace  $ \mathscr L^2_{_+}(\mathbb X) = \mathscr L^2_{_+}(\mathbb X \,, \mathbb V)\,\subset  \mathscr L^2(\mathbb X \,, \mathbb V)\,$ and its orthogonal complement  $\,\mathscr L^2_{_-}(\mathbb X) = \mathscr L^2_{_-}(\mathbb X\,, \mathbb V) \subset  \mathscr L^2(\mathbb X \,, \mathbb V)$. Thus, we have an orthogonal decomposition
\begin{equation} \label{decomposition} \mathscr L^2(\mathbb X) \, =  \,\mathscr L^2_{_+}(\mathbb X) \,\oplus \,\mathscr L^2_{_-}(\mathbb X)
\end{equation}
and the induced $\,\mathscr L^2$-projections
$$
 \Pi_{_+} \,; \mathscr L^2(\mathbb X) \, \onto\;   \,\mathscr L^2_{_+}(\mathbb X)\;,\quad \; \Pi_{_-} \,; \mathscr L^2(\mathbb X) \, \onto\;   \,\mathscr L^2_{_-}(\mathbb X)\;,\quad \;\mathbf I = \Pi_{_+}\,+\, \Pi_{_-}
$$
In general these linear operators may, or may not, extend to any of  $\,\mathscr L^s$-spaces with $\,s \neq 2\,$. Moreover, if they do extend, the range of such exponents depends on the decomposition (\ref{decomposition}). Let us take for granted an assumption that
\begin{equation}
\Pi_{_+} \,; \mathscr L^\mathfrak q (\mathbb X) \, \into\;   \,\mathscr L^\mathfrak q _\textnormal{weak}(\mathbb X)\,, \quad \textnormal{for some}\;\; 1\leqslant \mathfrak q < 2
\end{equation}
 meaning that for every $\,t > 0\,$ it holds
 \begin{equation}
 \textnormal{meas}\{ \,x\,; \;|\Pi_{_+}\omega (x)| > t\;\} \;\ll \frac{1}{t^\mathfrak q} \int_\mathbb X |\,\omega\,|^\mathfrak q\,,\quad \textnormal{whenever}\;\;\;\omega \in \mathscr L^2(\mathbb X)\cap \mathscr L^\mathfrak q(\mathbb X)
 \end{equation}
where the implied constant depends neither on $\,t\,$ nor on the function $\,\omega\,$. Of course the same is true for the operator $\,\Pi_{_-}\,$. By virtue of Marcinkiewicz Interpolation Theorem these two projections  extend as continuous operators:
\begin{equation}
\Pi_{_+} \,; \mathscr L^s(\mathbb X) \, \onto\;   \,\mathscr L^s_{_+}(\mathbb X)\,, \quad \textnormal{for all}\;\; \mathfrak q <s \leqslant 2
\end{equation}
and
\begin{equation}
\Pi_{_-} \,; \mathscr L^s(\mathbb X) \, \onto\;   \,\mathscr L^s_{_-}(\mathbb X)\,, \quad \textnormal{for all}\;\; \mathfrak q <s \leqslant 2
\end{equation}
where the spaces $\,\mathscr L^s_{_+}(\mathbb X)\,$ and  $\,\mathscr L^s_{_-}(\mathbb X)\,$  are the closures of  $\,\mathscr L^2_{_+}(\mathbb X) \cap \,\mathscr L^s(\mathbb X)\,$\,and \,$\,\mathscr L^2_{_-}(\mathbb X) \cap \,\mathscr L^s(\mathbb X)\,$\, in the norm of $\,\mathscr L^s(\mathbb X)\,$, respectively.
It is rather intriguing that a direct application of Marcinkiewicz interpolation does not guarantee a uniform bound of the norms of  $\,\Pi_{_\pm}\,$ as $\, s \,$ approaches 2, though such uniform bounds are evident from the Riesz-Thorin convexity theorem.  Nevertheless, since the  operators $\,\Pi_{_\pm}\,$ are selfadjoint in $\,\mathscr L^2(\mathbb X\,,\mathbb V)\,$, we may appeal to H\"{o}lder's duality to infer that
\begin{equation}
\Pi_{_+} \,; \mathscr L^s(\mathbb X) \, \onto\;   \,\mathscr L^s_{_+}(\mathbb X)\,, \quad \textnormal{for all}\;\; 2\leqslant s < \mathfrak p
\end{equation}
and
\begin{equation}
\Pi_{_-} \,; \mathscr L^s(\mathbb X) \, \onto\;   \,\mathscr L^s_{_-}(\mathbb X)\,, \quad \textnormal{for all}\;\; 2\leqslant s < \mathfrak p
\end{equation}
where $\,1\leqslant \mathfrak q < 2 < \mathfrak p \leqslant \infty\,$ is a H\"{o}lder conjugate pair; that is,  $ \mathfrak q \,+\, \mathfrak p\, =\, \mathfrak q \,\cdot\,\mathfrak p \,$.

Now, a uniform bound of the $\,s$-norms with  $\,s\approx 2\,$ follows from the Marcinkiewicz interpolation theorem as well. Let us summarize the above findings as
\begin{equation}
\Pi_{_\pm} \,; \mathscr L^s(\mathbb X) \, \onto\;   \,\mathscr L^s_{_\pm}(\mathbb X)\,, \quad \textnormal{for all}\;\; \mathfrak q \,<\, s \, <\,\mathfrak p
\end{equation}
The spaces  $\,\mathscr L^s_{_+}(\mathbb X)\,$ and  $\,\mathscr L^s_{_-}(\mathbb X)\,$ can also be characterized through the following properties
$$
\mathscr L^s_{_+}(\mathbb X) = \{\,\alpha \in \mathscr L^s(\mathbb X) \,; \;\Pi_{_+} \alpha = \,\alpha\,\}\,,\quad\; \mathscr L^s_{_-}(\mathbb X) = \{\,\beta \in \mathscr L^s(\mathbb X)\,; \;\Pi_{_-} \beta = \,\beta\,\}
$$
From now on we reserve the letters $\,p\,,\,q\,$ for a pair of H\"{o}lder conjugate exponents in the range
$$
 \mathfrak q <  q \leqslant p < \mathfrak p \,\,,\quad
 \textnormal{where}\;\;\;\;\; \frac{1}{p} \,+\, \frac{1}{q}\,= 1\,
$$
Let us conclude this subsection by recording  the  \textit{$(p,q)$- orthogonality } equation, reminiscent of the familiar div-curl product equation \cite{CLMS, I4, I5, IS2, Mu}
\begin{equation}\label{orthogonality}
 \int_\mathbb X \langle\, \alpha (x)\;|\; \beta(x)\;\rangle\;\textnormal d x  \;=\; 0\,, \quad\;\textnormal{for}\;\;\alpha \in \mathscr L^p_{_+}(\mathbb X) \;\;\;\textnormal{and}\; \;\; \beta \in \mathscr L^q_{_-} (\mathbb X)
\end{equation}
This shows that the dual space to $\,L^p_{_+}(\mathbb X)\,$ is $\,L^q_{_+}(\mathbb X)\,$, and  the dual space to  $\,L^p_{_-}(\mathbb X)\,$ is $\,L^q_{_-}(\mathbb X)\,$. In symbols, we have
$$
\,L^p_{_+}(\mathbb X)^{^\bigstar} \,=\, \,L^q_{_+}(\mathbb X)\,,\quad \,L^q_{_+}(\mathbb X)^{^\bigstar} \,=\, \,L^p_{_+}(\mathbb X)\,$$
$$L^p_{_-}(\mathbb X)^{^\bigstar} \,=\, \,L^q_{_-}(\mathbb X)\,,\quad \,L^q_{_-}(\mathbb X)^{^\bigstar} \,=\, \,L^p_{_-}(\mathbb X)
$$
Thus all the above are reflexive Banach spaces.
\subsection{The $\Pi_{_+}^p$ and $\Pi_{_-}^q$ projections} To every $\mathfrak a \in \mathscr L^p(\mathbb X\,,\mathbb V)\,$ there corresponds exactly one $\,\alpha \in \mathscr L^p _{_+}(\mathbb X\,,\mathbb V)\,$ which is the nearest to $\,\mathfrak a\,$ in the sense of $\,\mathscr L^p$-distance.
$$
\int_\mathbb X |\,\mathfrak a (x) \,-\, \alpha(x)\,|^p \,\textnormal d x  \;=\; \min_{\gamma \in \mathscr L^p _{_+}(\mathbb X\,,\mathbb V)} \;\int_\mathbb X |\,\mathfrak a (x) \,-\, \gamma(x)\,|^p \,\textnormal d x
$$
The variational equation for $\,\alpha\,$ takes the form
\begin{equation}
 |\,\mathfrak a \, - \,\alpha | ^{p-2} (\,\mathfrak a\, - \,\alpha )\;  \in\; \mathscr L^q_{_-} (\mathbb X, \mathbb V)
\end{equation}
In this way we have defined a nonlinear operator
$$ \Pi_{_+}^p \;;\; \mathscr L^p(\mathbb X\,,\mathbb V)\, \onto\; \mathscr L^p_{_+} (\mathbb X\,,\mathbb V)\,,\quad\,\textnormal{ by the rule}\,:\;\;\Pi_{_+}^p\mathfrak a = \alpha.
$$
Similarly, to every $\mathfrak b \in \mathscr L^q(\mathbb X\,,\mathbb V)\,$ there corresponds exactly one $\,\beta \in \mathscr L^q _{_-}(\mathbb X\,,\mathbb V)\,$ which is the $\mathscr L^q$-nearest to $\,\mathfrak b\,$.
The analogous variational equation for $\,\beta\,$ takes the form
\begin{equation}
 |\,\mathfrak b \, - \,\beta | ^{q-2} (\,\mathfrak b\, - \,\beta  ) \;  \in \;\mathscr L^p_{_+} (\mathbb X, \mathbb V)
\end{equation}
and defines a nonlinear operator
$$ \Pi_{_-}^q \;;\; \mathscr L^q(\mathbb X\,,\mathbb V)\, \onto\; \mathscr L^q_{_-} (\mathbb X\,,\mathbb V)\,,\quad\,\textnormal{ by the rule}\,:\;\;\Pi_{_-}^q\mathfrak b = \beta.
$$
Let us weld together the above variational equations by introducing so-called $\,(p\,,q)$-system
\begin{lemma} Given a pair $\,\mathfrak f = (\mathfrak a\,,\,\mathfrak b\,) \in \mathscr L^p(\mathbb X\,,\mathbb V)\times \mathscr L^q(\mathbb X\,,\mathbb V)\,$ there exists exactly one pair
$\,\phi = (\alpha\,,\,\beta\,) \in \mathscr L^p_{_+}(\mathbb X\,,\mathbb V)\times \mathscr L^q_{-}(\mathbb X\,,\mathbb V)\,$  such that
\begin{equation}\label{pq-equation}
 |\,\mathfrak a \, - \,\alpha | ^{p-2} (\,\mathfrak a\, - \,\alpha )\;=\; \mathfrak b - \beta   \in\; \mathscr L^q_{_-} (\mathbb X, \mathbb V)
\end{equation}
 or, equivalently
 \begin{equation}\label{qp-equation}
 |\,\mathfrak b \, - \,\beta | ^{q-2} (\,\mathfrak b\, - \,\beta\,)\;=\; \mathfrak a - \alpha   \in\; \mathscr L^p_{_+} (\mathbb X, \mathbb V)
\end{equation}
\end{lemma}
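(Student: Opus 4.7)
The plan is to produce $(\alpha,\beta)$ as the minimizer and associated dual variable of a strictly convex variational problem on $\mathscr L^p_{_+}(\mathbb X,\mathbb V)$. Define
\begin{equation*}
\mathcal G(\alpha) \;=\; \frac{1}{p}\int_\mathbb X|\,\mathfrak a(x)-\alpha(x)\,|^p\,\dtext x \;+\; \int_\mathbb X\langle\,\mathfrak b(x)\,|\,\alpha(x)\,\rangle\,\dtext x,
\end{equation*}
the second term being finite by H\"older's inequality. Strict convexity of $t\mapsto t^p$ for $p>1$ makes $\mathcal G$ strictly convex, while the elementary estimate $\mathcal G(\alpha)\geqslant \tfrac{1}{p}\big(\|\alpha\|_p-\|\mathfrak a\|_p\big)^p-\|\mathfrak b\|_q\|\alpha\|_p$ makes it coercive; norm-continuity together with convexity yields weak lower-semicontinuity.

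First I would invoke the direct method of the calculus of variations: a minimizing sequence is bounded by coercivity, extracts a weakly convergent subsequence by reflexivity of $\mathscr L^p_{_+}(\mathbb X,\mathbb V)$, and passes through weak lower-semicontinuity to a minimizer $\alpha\in\mathscr L^p_{_+}(\mathbb X,\mathbb V)$ that is unique by strict convexity. Next, for arbitrary $\phi\in\mathscr L^p_{_+}(\mathbb X,\mathbb V)$, the pointwise identity $\tfrac{d}{dt}|\mathfrak a-\alpha-t\phi|^p\big|_{t=0}=-p\,|\mathfrak a-\alpha|^{p-2}\langle\mathfrak a-\alpha\,|\,\phi\rangle$ is dominated-convergence-legal and yields the Euler--Lagrange identity
\begin{equation*}
\int_\mathbb X\big\langle\,\mathfrak b-|\mathfrak a-\alpha|^{p-2}(\mathfrak a-\alpha)\,\big|\,\phi\,\big\rangle\,\dtext x \;=\;0,\qquad\textnormal{for every}\;\;\phi\in\mathscr L^p_{_+}(\mathbb X,\mathbb V).
\end{equation*}
I then set $\beta\,{\overset{{}_{\textnormal{\tiny{def}}}}{\;=\!\!=\;}}\,\mathfrak b-|\mathfrak a-\alpha|^{p-2}(\mathfrak a-\alpha)$. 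Since $\mathfrak a-\alpha\in\mathscr L^p$ implies $|\mathfrak a-\alpha|^{p-2}(\mathfrak a-\alpha)\in\mathscr L^q$, we get $\beta\in\mathscr L^q(\mathbb X,\mathbb V)$. The Euler--Lagrange identity says $\beta$ annihilates $\mathscr L^p_{_+}(\mathbb X,\mathbb V)$ in the duality pairing; decomposing $\beta=\beta_{_+}+\beta_{_-}$ via $\mathscr L^q=\mathscr L^q_{_+}\oplus\mathscr L^q_{_-}$ (valid for $q\in(\mathfrak q,\mathfrak p)$), the $(p,q)$-orthogonality relation (\ref{orthogonality}) together with the duality $(\mathscr L^p_{_+})^\bigstar=\mathscr L^q_{_+}$ force $\beta_{_+}=0$, hence $\beta\in\mathscr L^q_{_-}(\mathbb X,\mathbb V)$. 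This is the first form of the $(p,q)$-system; the equivalent second form follows by pointwise inversion of the bijection $u\mapsto|u|^{p-2}u$ on $\mathbb V$.

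The main obstacle I anticipate is precisely the identification of $(\mathscr L^p_{_+})^\perp$ inside $\mathscr L^q$ with $\mathscr L^q_{_-}$: the $(p,q)$-orthogonality relation gives only one inclusion, and the reverse rests on boundedness of $\Pi_{_+}$ on $\mathscr L^q$ (available in the range $q\in(\mathfrak q,\mathfrak p)$) together with the dual characterization recorded in the preceding subsection. For uniqueness, if one prefers not to appeal to strict convexity of $\mathcal G$, one can subtract the equations for two candidate solutions $(\alpha_i,\beta_i)$ and test against $\alpha_2-\alpha_1\in\mathscr L^p_{_+}$: by $(p,q)$-orthogonality the right-hand side vanishes, leaving
\begin{equation*}
\int_\mathbb X\big\langle\,|\mathfrak a-\alpha_1|^{p-2}(\mathfrak a-\alpha_1)-|\mathfrak a-\alpha_2|^{p-2}(\mathfrak a-\alpha_2)\,\big|\,(\mathfrak a-\alpha_1)-(\mathfrak a-\alpha_2)\,\big\rangle\,\dtext x\;=\;0,
\end{equation*}
and strict monotonicity of the $p$-duality map $u\mapsto|u|^{p-2}u$ forces $\mathfrak a-\alpha_1=\mathfrak a-\alpha_2$ a.e., whence $\alpha_1=\alpha_2$ and then $\beta_1=\beta_2$.
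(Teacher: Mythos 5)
Your proof is correct and follows essentially the same route as the paper: minimize a strictly convex perturbed $p$-energy over $\mathscr L^p_{_+}(\mathbb X,\mathbb V)$ and read off $\beta$ from the Euler--Lagrange condition and the duality $(\mathscr L^p_{_+})^{\bigstar}=\mathscr L^q_{_+}$, $\mathscr L^q=\mathscr L^q_{_+}\oplus\mathscr L^q_{_-}$. The paper merely states the variational problem and asserts the conclusion, whereas you supply the steps (coercivity, direct method, differentiation under the integral, identification of $\beta\in\mathscr L^q_{_-}$, and uniqueness via strict monotonicity) that it leaves to the reader; your sign convention on the linear term, $+\int\langle\mathfrak b\,|\,\alpha\rangle$, is the one that yields \eqref{pq-equation} directly.
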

\begin{proof}
One finds, uniquely, $ \,\alpha \in  \mathscr L^p_{_+} (\mathbb X\,,\mathbb V)\,$ and $\, \beta \in  \mathscr L^q_{_-} (\mathbb X\,,\mathbb V)\,$ by solving   the following strictly convex variational problem: given a pair $\, \mathfrak f = (\mathfrak a\,,\,\mathfrak b ) \in   \mathscr L^p (\mathbb X\,,\mathbb V)\,\times\,   \mathscr L^q (\mathbb X\,,\mathbb V)\,$ find $ \,\phi =(\alpha\,,\, \beta )  \in  \mathscr L^p_{_+} (\mathbb X\,,\mathbb V)\,\times \, \mathscr L^q_{_-} (\mathbb X\,,\mathbb V)\,$ such that
$$
 \min_{\alpha'\in \mathscr L^p _{_+}(\mathbb X\,,\mathbb V)} \;\int_\mathbb X |\,\mathfrak a \,-\, \alpha'\,|^p \; -\; p \,\langle\,\mathfrak b\,|\, \alpha' \rangle\;=\; \int_\mathbb X |\,\mathfrak a \,-\, \alpha\,|^p \; -\; p \,\langle\,\mathfrak b\,|\, \alpha \rangle
$$
or, equivalently,
$$
 \min_{\beta' \in \mathscr L^q _{_-}(\mathbb X\,,\mathbb V)} \;\int_\mathbb X |\,\mathfrak b \,-\, \beta'\,|^q \; -\; q \,\langle\,\mathfrak a\,|\, \beta' \rangle\;=\; \int_\mathbb X |\,\mathfrak b \,-\, \beta\,|^q \; -\; q \,\langle\,\mathfrak a\,|\, \beta\rangle $$
 \end{proof}
The reader may wish to keep an eye on a duality between these two  variational problems; precisely,  the solution of one of them yields the solution of the other via the equations
(\ref{pq-equation}) and (\ref{qp-equation}). In fact they can be given one aesthetically pleasing symmetric form,

\begin{equation}\label{power equation}
(\mathfrak a - \alpha)^ p \;=\; (\mathfrak b - \beta)^q  \;\in \mathscr L^1 (\mathbb X\,,\mathbb V) \,,
\end{equation}
Here we have introduced the notation $\,\mathfrak v ^s {\overset{{}_{\textnormal{\tiny{def}}}}{\;=\!\!=\;}} |\mathfrak v|^{s-1}\mathfrak v\,$ for the $\,s$-power of a vector $\,\mathfrak v\,$ in a normed space. \\
It is worthwhile to put the equation (\ref{power equation})  in even more general framework.
\subsection{The most general setting}
Let $\,\mathfrak A \,: \,\mathbb X \times \mathbb V \rightarrow \mathbb V \,$ be a given function satisfying the following requirements:
\begin{itemize}
\item  \textit{Carath\'{e}odory regularity}:\\
The function $\, x\rightarrow \mathfrak A(x\,, \mathfrak v )\,$ is measurable for every  $\mathfrak v \in \mathbb V $\\
The function $\,\mathfrak v \rightarrow \mathfrak A(x\,, \mathfrak v )\,$ is continuous for almost every  $\, x \in \mathbb X $
\item  \textit{Homogeneity}:
$$ \mathfrak A(x\,, \lambda \,\mathfrak v )  = \lambda^{p-1} \cdot \mathfrak A(x\,, \mathfrak v )\,,\quad\, \textnormal{for}\;\;\lambda \geqslant 0 $$
\item \textit{Lipschitz condition:}
$$
|\,\mathfrak A(x\,, \mathfrak v_1 ) \; -\; \mathfrak A(x\,, \mathfrak v_2 )\,| \;\ll\;  \big(\,|\mathfrak v_1\,|\,+\, |\mathfrak v_2\,|\,\big)^{p-2}|\,\mathfrak v_1 \,-\, \mathfrak v_2\,|
$$
\item \textit{Monotonicity condition}:
$$
\langle\,\mathfrak A(x\,, \mathfrak v_1 ) \; -\; \mathfrak A(x\,, \mathfrak v_2 )\;|\; \mathfrak v_1\,-\,\mathfrak v_2 \;\rangle \;\gg\;  \big(\,|\mathfrak v_1\,|\,+\, |\mathfrak v_2\,|\,\big)^{p-2}|\,\mathfrak v_1 \,-\, \mathfrak v_2\,|^2
$$
\end{itemize}
\begin{remark}
In the above inequalities the implied constants are independent of $\, x \in \mathbb X\, $ and $\,\mathfrak v_1,\,\mathfrak v_2\,\in \mathbb V\,$. The Carath\'{e}odory regularity makes certain that the function $\, x\rightarrow \mathfrak A(x\,, \mathfrak v(x) )\,$ is measurable whenever $\,\mathfrak v = \mathfrak v(x)\,$ is measurable, by Scorza-Dragoni Theorem.
\end{remark}
 It is clear that for $\,x\,$ fixed the mapping $\,\mathfrak v \rightarrow \mathfrak A(x\,, \mathfrak v )\,$ is invertible. Let its inverse be denoted by $
  \,\mathfrak v \rightarrow \mathfrak B(x\,, \mathfrak v )\,$; that is, for almost every $\,x\in\,\mathbb X\,$, we have
   $$\mathfrak B(x\,, \ast) \circ \mathfrak A(x\,, \ast ) \;=\; \mathfrak A(x\,, \ast) \circ \mathfrak B(x\,, \ast )\;=\;\mathbf I : \mathbb V \rightarrow \mathbb V
 $$
\subsection{The natural domain of definition} The problem of solving the most general equation in its natural domain of definition reads as follows. \\

\textbf{Problem 1}.
Given a pair $\, \mathfrak f = (\mathfrak a\,,\mathfrak b ) \in   \mathscr L^p (\mathbb X\,,\mathbb V)\,\times\,   \mathscr L^q (\mathbb X\,,\mathbb V)\,$ solve the following equation
\begin{equation}\label{general equation}
\mathfrak A(x, \mathfrak a +\alpha)  =  \mathfrak b + \beta \,, \quad\textnormal{equivalently} \quad \mathfrak B(x, \mathfrak b +\beta)  =  \mathfrak a + \alpha
\end{equation}
for  $ \,\phi =(\alpha\,,\, \beta )  \in  \mathscr L^p_{_+} (\mathbb X\,,\mathbb V)\,\times \, \mathscr L^q_{_-} (\mathbb X\,,\mathbb V)\,$.\\

A duality between the two equations at (\ref{general equation}) is emphasized by the fact that the inverse mapping  $\,\mathfrak v \rightarrow \mathfrak B(x\,, \mathfrak v )\,$ satisfies the same requirements as $\,\mathfrak v \rightarrow \mathfrak A(x\,, \mathfrak v )\,$, but  with the H\"{o}lder conjugate exponent $\,q\,$ in place of $\,p\,$. The key to the existence and uniqueness of the solutions lies in the $(p,q)$-orthogonality relation (\ref{orthogonality}) between the unknown functions $\,\alpha\,$ and $\,\beta\,$.
\begin{theorem}\label{existence and uniqueness}
The equation (\ref{general equation}) has unique solution. Moreover
\begin{equation}\label{basic estimate}
\int_{\mathbb X} |\,\alpha\,| ^p\,+\, |\,\beta\,|^q \;\ll\; \int_{\mathbb X} |\,\mathfrak a\,| ^p\,+\, |\,\mathfrak b\,|^q
\end{equation}
\end{theorem}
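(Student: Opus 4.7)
The plan is to apply the Browder--Minty monotone operator theorem on the reflexive Banach space $\mathscr{L}^p_{_+}(\mathbb{X},\mathbb{V})$, whose topological dual is $\mathscr{L}^q_{_+}(\mathbb{X},\mathbb{V})$ by the $(p,q)$-orthogonality relation~(\ref{orthogonality}). First I reduce~(\ref{general equation}) to a single nonlinear equation on $\mathscr{L}^p_{_+}$: the requirement $\beta \in \mathscr{L}^q_{_-}$ is, by~(\ref{orthogonality}), equivalent to $\int_{\mathbb{X}} \langle \beta \mid \gamma \rangle\,\textnormal{d}x = 0$ for every $\gamma \in \mathscr{L}^p_{_+}$; substituting $\beta = \mathfrak{A}(x,\mathfrak{a}+\alpha)-\mathfrak{b}$ turns the system into
$$
\int_{\mathbb{X}} \langle \mathfrak{A}(x,\mathfrak{a}+\alpha)-\mathfrak{b}\mid\gamma\rangle\,\textnormal{d}x = 0 \qquad \text{for every } \gamma \in \mathscr{L}^p_{_+}.
$$
Homogeneity gives $\mathfrak{A}(x,0)=0$, and the Lipschitz condition with $\mathfrak{v}_2=0$ then yields $|\mathfrak{A}(x,\mathfrak{v})|\ll|\mathfrak{v}|^{p-1}$, so $\mathfrak{A}(x,\mathfrak{a}+\alpha)\in \mathscr{L}^q$ and the problem becomes $F(\alpha)=\mathfrak{b}^{\star}$, where $F:\mathscr{L}^p_{_+}\to\mathscr{L}^q_{_+}$ is defined through the duality pairing by $\langle F(\alpha),\gamma\rangle=\int\langle \mathfrak{A}(x,\mathfrak{a}+\alpha)\mid\gamma\rangle\,\textnormal{d}x$ and $\mathfrak{b}^{\star}\in \mathscr{L}^q_{_+}$ is the functional $\gamma\mapsto\int\langle \mathfrak{b}\mid\gamma\rangle\,\textnormal{d}x$.

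I then verify the Browder--Minty hypotheses. The monotonicity condition on $\mathfrak{A}$ directly gives the strict monotonicity
$$
\langle F(\alpha_1)-F(\alpha_2),\,\alpha_1-\alpha_2\rangle \;\gg\; \int_{\mathbb{X}} \bigl(|\mathfrak{a}+\alpha_1|+|\mathfrak{a}+\alpha_2|\bigr)^{p-2}|\alpha_1-\alpha_2|^2\,\textnormal{d}x \;>\; 0
$$
for $\alpha_1\neq\alpha_2$, from which uniqueness follows. Carath\'{e}odory regularity combined with the Lipschitz growth makes $\alpha\mapsto\mathfrak{A}(\cdot,\mathfrak{a}+\alpha)$ a norm-continuous Nemytskii operator from $\mathscr{L}^p$ into $\mathscr{L}^q$, hence $F$ is continuous and in particular hemicontinuous. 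Coercivity follows from the endpoint monotonicity $\langle\mathfrak{A}(x,\mathfrak{v})\mid\mathfrak{v}\rangle\gg|\mathfrak{v}|^p$ (obtained by setting $\mathfrak{v}_2=0$ in the monotonicity condition): writing $\alpha=(\mathfrak{a}+\alpha)-\mathfrak{a}$ and applying H\"{o}lder and Young's inequalities gives $\langle F(\alpha)-\mathfrak{b}^{\star},\alpha\rangle\gg\|\alpha\|_p^p-C(\|\mathfrak{a}\|_p^p+\|\mathfrak{b}\|_q^q+\|\mathfrak{b}\|_q\|\alpha\|_p)$, which divided by $\|\alpha\|_p$ tends to $\infty$ as $\|\alpha\|_p\to\infty$. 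Browder--Minty therefore produces a unique solution $\alpha\in\mathscr{L}^p_{_+}$, and $\beta:=\mathfrak{A}(x,\mathfrak{a}+\alpha)-\mathfrak{b}$ lies in $\mathscr{L}^q_{_-}$ by construction.

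To obtain the a priori bound~(\ref{basic estimate}), I pair both sides of $\mathfrak{A}(x,\mathfrak{a}+\alpha)=\mathfrak{b}+\beta$ with $\mathfrak{a}+\alpha$ and integrate. The $(p,q)$-orthogonality annihilates the cross term $\int\langle\alpha\mid\beta\rangle\,\textnormal{d}x=0$, leaving
$$
\int_{\mathbb{X}}\langle\mathfrak{A}(x,\mathfrak{a}+\alpha)\mid\mathfrak{a}+\alpha\rangle\,\textnormal{d}x = \int_{\mathbb{X}}\bigl(\langle\mathfrak{b}\mid\mathfrak{a}\rangle+\langle\mathfrak{b}\mid\alpha\rangle+\langle\beta\mid\mathfrak{a}\rangle\bigr)\,\textnormal{d}x.
$$
The left-hand side is $\gg\|\mathfrak{a}+\alpha\|_p^p$ by the endpoint monotonicity, the right-hand side is bounded by $\varepsilon(\|\mathfrak{a}+\alpha\|_p^p+\|\beta\|_q^q)+C_\varepsilon(\|\mathfrak{a}\|_p^p+\|\mathfrak{b}\|_q^q)$ via H\"{o}lder and Young, and the growth bound $|\beta|\ll|\mathfrak{a}+\alpha|^{p-1}+|\mathfrak{b}|$ gives $\|\beta\|_q^q\ll\|\mathfrak{a}+\alpha\|_p^p+\|\mathfrak{b}\|_q^q$; absorbing these terms yields~(\ref{basic estimate}).

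The main technical obstacle is the \emph{subquadratic} range $1<p<2$, in which the monotonicity weight $(|\mathfrak{v}_1|+|\mathfrak{v}_2|)^{p-2}$ is a negative power of $|\mathfrak{v}_1|+|\mathfrak{v}_2|$ and the quantitative strict monotonicity does not translate directly into an $\mathscr{L}^p$ lower bound suitable for coercivity. The crucial observation bypassing this difficulty is that both the coercivity and the a priori estimate invoke monotonicity only at the endpoint $\mathfrak{v}_2=0$, where the weight degenerates and one recovers the clean bound $\langle\mathfrak{A}(x,\mathfrak{v})\mid\mathfrak{v}\rangle\gg|\mathfrak{v}|^p$ uniformly in $p>1$; the role of full strict monotonicity is then confined to the qualitative uniqueness statement, which does not require sharp quantitative weights.
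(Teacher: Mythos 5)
Your proposal is correct and follows essentially the same route as the paper: the paper also fixes $\mathfrak a$, defines the map $\mathbf T\alpha = \Pi^q_{_+}\mathfrak A(x, \mathfrak a + \alpha)$ from the reflexive space $\mathscr L^p_{_+}(\mathbb X)$ into its dual $\mathscr L^q_{_+}(\mathbb X)$, verifies continuity, strict monotonicity, and coercivity, invokes Minty--Browder for existence and uniqueness, and obtains (\ref{basic estimate}) by pairing the equation with $\mathfrak a + \alpha$ and using the $(p,q)$-orthogonality to annihilate $\int_\mathbb X\langle\alpha\mid\beta\rangle$. The only cosmetic difference is that the paper organizes the absorption as a pointwise inequality $|\alpha|^p + |\beta|^q \ll |\mathfrak a|^p + |\mathfrak b|^q + \langle\alpha\mid\beta\rangle$ which is then integrated, whereas you carry out H\"{o}lder and Young at the integral level; your concluding remark about the subquadratic range $1<p<2$ and the sufficiency of endpoint monotonicity is a useful elaboration of a point the paper leaves implicit.
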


\begin{proof}
Since in general the equation (\ref{general equation}) is not arising from a minimization of a variational integral, the existence of the solutions cannot be established by a variational method. The Minty-Browder theory of monotone operators ~\cite{Br},~\cite {Mi} will come to the rescue.
Fix a function $\, \mathfrak a \in \,\mathscr L^p_{_+} (\mathbb X)\,$ and consider a map $\,\mathbf T\,$ from  the reflexive Banach space $\,\mathscr L^p_{_+} (\mathbb X )\,$  into its dual  $\,\mathscr L^q_{_+} (\mathbb X )\,$, defined by the rule
$$
 \mathbf T \alpha = \Pi^q_{_+} \mathfrak A (x, \mathfrak a + \alpha ) \,\in  \,\mathscr L^q_{_+} (\mathbb X)\,, \quad\,\textnormal{for}\;\;\;\;\alpha \in \mathscr L^p_{_+}(\mathbb X) .
$$
A routine application of the requirements for $\mathfrak A\,$ shows that this map is:
\begin{itemize}
\item \textit{continuous},
\item \textit{strictly monotone}; that is,
$$ \int_{\mathbb X} \langle \mathbf T \alpha_1 \, - \mathbf T \alpha_2 \,|\,\alpha_1 \,- \,\alpha_2 \;\rangle > 0 \;, \quad \textnormal{whenever}\;\;\;\alpha_1 \neq \alpha_2 \,\quad \textnormal{in }\;\;\;\;\; \mathscr L^p_{_+} (\mathbb X )$$
\item \textit{and coercive}; that is,
$$
 \lim_{\|\alpha\|_p \rightarrow \infty }\, \frac{\int_\mathbb X \langle\, \mathbf T \alpha  \,|\,\alpha  \,\rangle}{\|\,\alpha\,\|_p} \;=\;\infty
$$
\end{itemize}
\vskip0.3cm
The Minty-Browder theory asserts that $\,\mathbf T\,$ is bijective. Let $\,\mathfrak b \in \mathscr L^q (\mathbb X)\,$ be given, so we are given  an element $\, \Pi^q_{_+} \mathfrak b \in \mathscr L^q_{_+} (\mathbb X)\,$ . There is exactly one $\, \alpha \in \mathscr L^p_{_+} (\mathbb X)\,$ such that $\,\Pi^q_{_+} \mathfrak A (x, \mathfrak a + \alpha ) = \Pi^q_{_+} \mathfrak b\,$, meaning that $\, \mathfrak A (x, \mathfrak a + \alpha ) =  \mathfrak b\, + \beta\,$, where $\,\beta \in
\mathscr L^q_{_-} (\mathbb X)\,$.\\
A key step in obtaining estimate (\ref{basic estimate}) is the $\,(p,q)$-orthogonality of $\,\alpha \in \mathscr L^p_{_+}(\mathbb X)\,$ and $\,\beta \in \mathscr L^q_{_-}(\mathbb X) \,$. Using the properties imposed on $\,\mathfrak A\,$ and $\,\mathfrak B\,$ a computation runs as follows

 \[\begin{split} |\,\mathfrak a +\,\alpha\,|^p & \ll\;\langle\;\mathfrak A(x, \mathfrak a + \alpha)\,|\, \mathfrak a + \alpha \,\rangle \;=  \langle\;\mathfrak A(x, \mathfrak a + \alpha)\,|\, \mathfrak a\,\rangle  + \langle\, \mathfrak b + \beta)\,|\, \alpha \,\rangle \;\;\\& \ll
 |\,\mathfrak a + \alpha)\,|^{p-1} |\mathfrak a \,|  \;+  \langle\,\mathfrak a + \alpha\,|\, \mathfrak b \,\rangle  \,- \,\langle\, \mathfrak a\,| \mathfrak b \,\rangle\,  + \,\langle\, \alpha\,|\, \beta\,\rangle
\end{split}\]
 This yields $$ |\,\mathfrak a +\,\alpha\,|^p \ll |\,\mathfrak a\,|^p \,+\, |\,\mathfrak b\,|^q \,+  \,\langle\, \alpha\,|\, \beta\,\rangle\,$$
On the other hand, in view of (\ref{general equation}) it follows that
 $$|\,\mathfrak b \,+\,\beta \,|^q  \ll   |\,\mathfrak a +\,\alpha\,|^p  \ll\, |\,\mathfrak a\,|^p \,+\, |\,\mathfrak b\,|^q \,+  \,\langle\, \alpha\,|\, \beta\,\rangle $$
 Summing these two inequalities, we obtain
 \begin{equation}
 |\,\mathfrak a +\,\alpha\,|^p\;+\;|\,\mathfrak b \,+\,\beta \,|^q  \;\ll\, |\,\mathfrak a\,|^p \,+\, |\,\mathfrak b\,|^q \,+  \,\langle\, \alpha\,|\, \beta\,\rangle
 \end{equation}
 We arrive at the following point-wise estimate,
 \begin{equation}
 |\,\alpha\,|^p\;+\;|\,\beta \,|^q  \;\ll\, |\,\mathfrak a\,|^p \,+\, |\,\mathfrak b\,|^q \,+  \,\langle\, \alpha\,|\, \beta\,\rangle
 \end{equation}
 which, upon integration,  gives the desired estimate (\ref{basic estimate}).
\end{proof}

\section{Beyond the Natural Domain of Definition}

It makes sense to consider the equation (\ref{general equation}) in which the given pair $\, \mathfrak f = (\mathfrak a\,,\mathfrak b )\,$ lies in $\, \mathscr L^{\lambda p} (\mathbb X\,,\mathbb V)\,\times\,   \mathscr L^{\lambda q} (\mathbb X\,,\mathbb V)\,$,  so the proper space in which to seek the solution $ \,\phi =(\alpha\,,\, \beta )\,$ is $\,  \mathscr L^{\lambda p}_{_+} (\mathbb X\,,\mathbb V)\,\times \, \mathscr L^{\lambda q}_{_-} (\mathbb X\,,\mathbb V)\,$, where $\,\lambda\,$ is close enough to 1 so that $\,\lambda \,q \geqslant 1\,$ and  $\,\lambda \,p \geqslant 1\,$. Taking into an account the projection operators $\,\Pi_{_\pm} \,; \; \mathscr L^s(\mathbb X) \rightarrow \mathscr L^s_{_\pm}(\mathbb X) \,$, $\,\mathfrak q < s < \mathfrak p \,$, we shall restrict ourselves to the parameters $\, \lambda\,$ such that
\begin{equation}
  \mathfrak q \;<\,\, \lambda\, q \;\leqslant \;\lambda\, p \;< \;\mathfrak p
\end{equation}
The estimates are similar to those in the proof of Theorem \ref{existence and uniqueness}, with one principal ingredient. Let us begin with a point-wise inequality
$$|\,\mathfrak a +\,\alpha\,|^{\lambda p}\;+\;|\,\mathfrak b \,+\,\beta \,|^{\lambda q}  \;\ll\,  |\,\mathfrak a +\,\alpha\,|^{\lambda p } \;\ll\, \langle\;\mathfrak A(x, \mathfrak a + \alpha)\,\,|\,\,   (\mathfrak a + \alpha )\,|\,\mathfrak a + \alpha \,|^{(\lambda-1)p}\;\rangle \;$$
We decompose the vector field $\,\mathfrak v = (\mathfrak a + \alpha )\,|\,\mathfrak a + \alpha \,|^{(\lambda-1)p}\,$ as $\,\mathfrak v = \Pi_{_+}\mathfrak v \,+\, \Pi_{_-}\mathfrak v \;$, use the equation $\,\mathfrak A(x, \mathfrak a + \alpha) = \mathfrak b + \beta\,$, and integrate. By the orthogonality of $\,\beta \in \mathscr L^{\lambda q}_{_-}(\mathbb X)\,$ and  $\,\Pi_{_+}\mathfrak v \in \mathscr L^{\frac{\lambda q}{\lambda q - 1}}_{_+}(\mathbb X) \,$, we obtain

\[\begin{split}\int_\mathbb X |\,\mathfrak a +\,\alpha\,|^{\lambda p}\;+\;&|\,\mathfrak b \,+\,\beta \,|^{\lambda q}  \;\ll\, \int_\mathbb X \langle\,\mathfrak b\,\,|\,\, \Pi_+\mathfrak v\;\rangle\; + \int_\mathbb X \langle\,\mathfrak A(x, \mathfrak a + \alpha)\,\,|\,\,\Pi_-\mathfrak v\, \rangle \\& \ll \|\,\mathfrak b\,\|_{\lambda q } \,\|\,\Pi_+\mathfrak v\,\|_{\frac{\lambda q}{\lambda q - 1}}\;\;+\;\; \|\,(\mathfrak a +\alpha)^{p-1}\,\|_{\lambda q } \,\|\,\Pi_-\mathfrak v\,\|_{\frac{\lambda q}{\lambda q - 1}}\\&\ll  \;\|\,\mathfrak b\,\|_{\lambda q } \,\|\, \|\,\mathfrak a + \alpha \,\|_{\lambda p} ^{\lambda p - p + 1}\;\; +\;\; \|\,\mathfrak a +\alpha\,\|_{\lambda p }^{p-1} \,\|\,\Pi_-\mathfrak v\,\|_{\frac{\lambda q}{\lambda q - 1}}
\end{split}\]
where we have used boundedness of the operator $\,\Pi_+\,;\,\mathscr L^s (\mathbb X) \rightarrow \,\mathscr L^s (\mathbb X)\,$ with $\, s = \frac{\lambda q}{\lambda q - 1}\,$ and the identity $\, \|\mathfrak v \,\|_s \, = \|\,\mathfrak a + \alpha \,\|_{\lambda p} ^{\lambda p - p + 1}\,$.
With the aid of Young's inequality the term $\,\|\,\mathfrak a + \alpha \,\|_{\lambda p}\,$ can be absorbed by the left hand side. We thus obtain
\begin{equation}\label{1}
\int_\mathbb X |\,\mathfrak a +\,\alpha\,|^{\lambda p}\;+\;|\,\mathfrak b \,+\,\beta \,|^{\lambda q} \; \ll \; \int_\mathbb X \,|\mathfrak b\,|^{\lambda q } \; + \;\int_\mathbb X  \,|\,\Pi_-\mathfrak v\,\,|^s
\end{equation}
Now comes the principal ingredient in the proof; we have $\,\Pi_-\alpha = 0\,$, so
\begin{equation}\label{18}
 \Pi_-\mathfrak v \;= [\,\Pi_-(\mathfrak a + \alpha )^{1 + \epsilon}\,]  -  [\,\Pi_- (\mathfrak a + \alpha )\,]^{1+\epsilon}            +  (\Pi_ -\mathfrak a)^{1+\epsilon}
\end{equation}
where $\,\epsilon = (\lambda-1) p\,$.

The $\,s$-norm of $\,(\Pi_ -\mathfrak a)^{1+\epsilon}\,$ is controlled by the $\,\lambda p\;$- norm of $\,\mathfrak a\,$, because the operator  $\,\Pi_-\,$ is bounded in the space $\,\mathscr L^{(1+\epsilon) s}(\mathbb X) =  \mathscr L^{\lambda p}(\mathbb X)\,$; namely,
\begin{equation}\label{2}
 \;\int_\mathbb X  \,|\,\Pi_-\mathfrak v\,\,|^s \;\ll\; \int_\mathbb X \,|\mathfrak a|^{\lambda p}
\end{equation}
The first two terms in the right hand side of (\ref{18})  form a power type commutator. Precisely, we are dealing with a commutator of the linear operator $\,\Pi_{_-}\,$ and the nonlinear map $\,\mathfrak v \rightarrow |\mathfrak v|^\epsilon \mathfrak v \,$ with $\,\epsilon\,$ close to zero; the exponent $\,\epsilon\,$ can be both positive or negative. Here is what a complex interpolation method yields \cite{I5, IS1, IS2, RW, Sb}.

\begin{theorem} \label{commutator} Suppose a linear operator $\,\Pi \,;\, \mathscr L^r(\mathbb X, \mathbb V) \rightarrow  \,\mathscr L^r(\mathbb X, \mathbb V) \,$ is continuous for all exponents $\,r\,$ in the range $\,1\leqslant \mathfrak q \,< \, r\,<\,\mathfrak p \leqslant \infty\,$. Then for every  $\,1\leqslant \mathfrak q \,< \, s \,<\,\mathfrak p \leqslant \infty\,$, we have
$$
 \|\;\Pi (\mathfrak v ^{1+\epsilon}) \;  -  \,\,(\Pi \mathfrak v )^{1+\epsilon}\;\|_s\;      \; \ll \,|\epsilon\,|\cdot\|\, \mathfrak v^{1 + \epsilon}\,\|_s \,,\quad \textnormal{whenever} \;\;\;\; \frac{\mathfrak q}{s} < \, 1+ \epsilon \,< \frac{\mathfrak p}{s}$$
\end{theorem}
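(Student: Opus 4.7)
The plan is to prove this commutator bound by Stein's complex interpolation applied to an analytic family that vanishes at a chosen point, in the spirit of Rochberg--Weiss. The crucial trick is a change of variable that uses $\mathfrak v^{1+\epsilon}$ as the primary unknown, so that the boundary norms match the target norm on the right-hand side.

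I will first reduce to the case of simple $\mathfrak v$ by density in $\mathscr L^{(1+\epsilon)s}(\mathbb X,\mathbb V)$, so that $\mathfrak v\in\mathscr L^r$ for every $r$, and by the common $(1+\epsilon)$-homogeneity of both sides I may normalize $\|\mathfrak v^{1+\epsilon}\|_s=1$. Setting $\phi:=\mathfrak v^{1+\epsilon}$ (so $\|\phi\|_s=1$) and $\sigma:=1/(1+\epsilon)$ (so $\mathfrak v=\phi^\sigma$), the hypothesis $\mathfrak q/s<1+\epsilon<\mathfrak p/s$ becomes $s/\mathfrak p<\sigma<s/\mathfrak q$. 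I then introduce the holomorphic family
$$
\tilde F(w) \;:=\; \Pi(\phi^w) \;-\; \bigl[\Pi(\phi^\sigma)\bigr]^{w/\sigma}, \qquad \operatorname{Re}(w)>0,
$$
where $\mathfrak u^\zeta:=|\mathfrak u|^{\zeta-1}\mathfrak u$ for complex $\zeta$. Two elementary facts drive everything: $\tilde F(\sigma)=0$ (both terms visibly coincide), and $\tilde F(1)=\Pi(\mathfrak v^{1+\epsilon})-(\Pi\mathfrak v)^{1+\epsilon}$ is precisely the commutator I wish to estimate.

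The decisive point is that the target space $\mathscr L^{s/a}$ is perfectly adapted to the vertical line $\operatorname{Re}(w)=a$. Because $|\phi^w|=|\phi|^{a}$ on that line,
$$
\|\phi^w\|_{s/a}^{s/a} \;=\; \int_\mathbb X |\phi|^s \;=\; 1,
$$
and therefore, for $\mathfrak q<s/a<\mathfrak p$ and $\mathfrak q<s/\sigma<\mathfrak p$,
$$
\|\Pi(\phi^w)\|_{s/a}\;\le\; C_{s/a}, \qquad \bigl\|[\Pi(\phi^\sigma)]^{w/\sigma}\bigr\|_{s/a} \;=\; \|\Pi(\phi^\sigma)\|_{s/\sigma}^{a/\sigma} \;\le\; C_{s/\sigma}^{a/\sigma}.
$$
So $\|\tilde F(w)\|_{s/a}\le C$ uniformly on every vertical line with $a$ in a compact subinterval of $(s/\mathfrak p,s/\mathfrak q)$.

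Now I form $\tilde G(w):=\tilde F(w)/(w-\sigma)$, which is still holomorphic by the zero of $\tilde F$ at $\sigma$. Choosing $a_0<\min(\sigma,1)$ and $a_1>\max(\sigma,1)$ inside $(s/\mathfrak p,s/\mathfrak q)$, I obtain $\|\tilde G(w)\|_{s/a_j}\le C/|a_j-\sigma|$ on the boundary lines. Stein's complex interpolation theorem for vector-valued analytic functions in a strip, with boundary target spaces $\mathscr L^{s/a_0}$ and $\mathscr L^{s/a_1}$, interpolates at $\operatorname{Re}(w)=1$ (where $1=(1-\theta)a_0+\theta a_1$) to the target $\mathscr L^s$, yielding $\|\tilde G(1)\|_s\le M_0^{1-\theta}M_1^\theta$. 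Hence
$$
\|\tilde F(1)\|_s \;=\; |1-\sigma|\cdot\|\tilde G(1)\|_s \;\ll\; \frac{|\epsilon|}{1+\epsilon} \;\ll\; |\epsilon|\,\|\mathfrak v^{1+\epsilon}\|_s,
$$
after restoring the normalization.

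The main obstacle I anticipate is the rigorous deployment of Stein interpolation in this setting: one must verify the admissible growth condition of $\tilde F$ (equivalently, of $\tilde G$) as $|\operatorname{Im}(w)|\to\infty$. The magnitudes $|\phi^w|$ and $|[\Pi(\phi^\sigma)]^{w/\sigma}|$ are independent of $\operatorname{Im}(w)$, so the family is bounded on each vertical line, but the oscillatory multiplier $|\phi|^{i\operatorname{Im}(w)}$ passes through $\Pi$ and must be handled by the standard admissibility (polynomial growth in $\operatorname{Im}(w)$ after multiplication by a Gaussian factor $e^{\gamma w^2}$). Once this is checked, the constants in the final estimate depend only on the operator norms $C_r$ of $\Pi$ at the auxiliary exponents $s/a_0,\,s/a_1,\,s/\sigma$, which stay uniformly bounded when $s$ and $\epsilon$ keep a fixed distance from $\mathfrak q/s$ and $\mathfrak p/s$.
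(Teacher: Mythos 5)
The paper does not actually prove Theorem~\ref{commutator}; it states it and cites complex interpolation methods, naming \cite{I5, IS1, IS2, RW, Sb} (Rochberg--Weiss in particular). Your proposal is a faithful and correct implementation of precisely that route: the holomorphic family $\tilde F(w)=\Pi(\phi^w)-[\Pi(\phi^\sigma)]^{w/\sigma}$ with the zero at $w=\sigma$, the normalization $\|\phi\|_s=1$, the observation that the magnitudes of $\phi^w$ and $[\Pi(\phi^\sigma)]^{w/\sigma}$ depend only on $\operatorname{Re}(w)$ (so the vertical-line norms are uniform), the division by $w-\sigma$, and the Stein/three-lines interpolation back to $\mathscr L^s$ are all correct, and the computation $\tilde F(1)=\Pi(\mathfrak v^{1+\epsilon})-(\Pi\mathfrak v)^{1+\epsilon}$, $|1-\sigma|=|\epsilon|/(1+\epsilon)$ yields exactly the asserted bound.

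The admissibility concern you flag at the end is real but routine and is already essentially resolved by what you wrote: after reducing to simple $\mathfrak v$, the family $\tilde G(w)$ has uniformly bounded $\mathscr L^{s/a}$-norms throughout the closed substrip $a_0\le a\le a_1$ (not merely on the two boundary lines), so the pairing $w\mapsto\int\langle\tilde G(w)\,|\,g^{P(w)}\rangle$ against a simple test function $g$ (with $P$ chosen so that $\|g^{P(w)}\|_{(s/a)'}=1$ on $\operatorname{Re}(w)=a$) is a bounded analytic scalar function, and the three-lines lemma applies with no Gaussian damping factor needed. The only quantitative caveat worth stating explicitly is that the resulting implied constant blows up as $1+\epsilon$ approaches $\mathfrak q/s$ or $\mathfrak p/s$, since $a_0,a_1$ get squeezed against $\sigma$; this matches the theorem's intended use in the paper (where $\epsilon$ is taken near $0$), but the statement as written could be read as claiming a uniform constant over the full open interval, which your argument---and the Rochberg--Weiss method generally---does not deliver without restricting $1+\epsilon$ to a compact subinterval.
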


This gives:
$$
  \|\;[\,\Pi_-(\mathfrak a + \alpha )^{1 + \epsilon}\,]  -  [\,\Pi_- (\mathfrak a + \alpha )\,]^{1+\epsilon}\;\|_s\; \ll \,|\epsilon\,|\cdot\|\, (\mathfrak a + \alpha )^{1 + \epsilon}\,\|_s \,= \,|\epsilon| \cdot \|\, \mathfrak a + \alpha \,\|_{\lambda p}^{1+\epsilon}
$$

Finally, we chose $\,\epsilon\,$ small enough, meaning that $\,\lambda \approx 1\,$, to absorb this last term  by the left hand side of equation  (\ref{1}). Combining with  equation (\ref{2}), we obtain
\begin{equation}\label{3}
\int_\mathbb X |\,\mathfrak a +\,\alpha\,|^{\lambda p}\;+\;|\,\mathfrak b \,+\,\beta \,|^{\lambda q} \; \ll \; \int_\mathbb X \,|\mathfrak b\,|^{\lambda q } \; + \;\int_\mathbb X  |\,\mathfrak a\,|^{\lambda p}
\end{equation}
Let us introduce the notation
\begin{equation} \label{[]}
[\mathfrak f\, ] = |\mathfrak a |^p + |\mathfrak b |^q  \quad\quad\quad  [\phi\, ] = |\alpha|^p + |\beta |^q
\end{equation}

We just proved that:

\begin{theorem}
For all parameters $\,\lambda\,$ sufficiently close to 1; precisely, for
\begin{equation}\label{4}
\lambda \in (\lambda_- \,,\,\lambda _+ ) \;,\quad \textnormal{where }\;\;\;\;\frac{\mathfrak q}{q}\leqslant\lambda_ -\,< 1 \,<\lambda_+ \leqslant \frac{\mathfrak p}{p}
\end{equation}
 we have
\begin{equation}\label{3}
\int_\mathbb X [\phi]^\lambda  \; \ll \; \int_\mathbb X \,[\mathfrak f ]^\lambda
\end{equation}
\end{theorem}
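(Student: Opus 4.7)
The plan is to read this theorem off directly from the computations already assembled in the preceding paragraphs, packaged as a clean proof. I would begin with monotonicity of $\mathfrak A$: for any vector $\mathfrak w$ one has the pointwise lower bound $\langle \mathfrak A(x,\mathfrak a+\alpha) \mid \mathfrak a+\alpha\rangle \gg |\mathfrak a+\alpha|^p$. Multiplying by the weight $|\mathfrak a+\alpha|^{(\lambda-1)p}$ and testing against the vector field $\mathfrak v = (\mathfrak a+\alpha)|\mathfrak a+\alpha|^{(\lambda-1)p}$ turns this into an integral inequality with the equation $\mathfrak A(x,\mathfrak a+\alpha) = \mathfrak b + \beta$ visible on the right. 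This is the starting identity that eventually has to be converted into a bound on $\int|\mathfrak a+\alpha|^{\lambda p} + |\mathfrak b+\beta|^{\lambda q}$.

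Next I would exploit the projection decomposition $\mathfrak v = \Pi_+\mathfrak v + \Pi_-\mathfrak v$. Because $\beta \in \mathscr L^{\lambda q}_-$ and $\Pi_+\mathfrak v \in \mathscr L^{\lambda q/(\lambda q-1)}_+$ are paired at H\"older-conjugate exponents lying in the admissible range $(\mathfrak q,\mathfrak p)$, the $(p,q)$-orthogonality relation \eqref{orthogonality} kills the term $\langle \beta, \Pi_+\mathfrak v\rangle$. What survives can be estimated by H\"older's inequality, the boundedness of $\Pi_+$ on $\mathscr L^s$ with $s = \lambda q/(\lambda q -1)$, and a Young's-inequality absorption of the $\|\mathfrak a+\alpha\|_{\lambda p}$ factor; this yields exactly inequality \eqref{1}, which reduces the whole matter to controlling $\int|\Pi_-\mathfrak v|^s$.

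The heart of the proof is the three-term decomposition \eqref{18}, which is available precisely because $\Pi_-\alpha = 0$. The last piece $(\Pi_-\mathfrak a)^{1+\epsilon}$ is handled in $\mathscr L^s$ by the $\mathscr L^{\lambda p}$-boundedness of $\Pi_-$, giving \eqref{2}; this is where the restriction $\mathfrak q < \lambda q$ and $\lambda p < \mathfrak p$ is used. The remaining two pieces form a commutator between the linear projection $\Pi_-$ and the nonlinear power map $\mathfrak w \mapsto \mathfrak w^{1+\epsilon}$, and this is where Theorem \ref{commutator} enters to produce a bound of the form $|\epsilon|\cdot \|\mathfrak a+\alpha\|_{\lambda p}^{1+\epsilon}$.

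The principal obstacle is, of course, this commutator. Its mere boundedness is not enough: I need the factor $|\epsilon|$ in front so that, for $\lambda$ sufficiently close to $1$, the term $|\epsilon|\cdot\|\mathfrak a+\alpha\|_{\lambda p}^{1+\epsilon}$ can be absorbed into the left-hand side of \eqref{1} via Young's inequality. This is exactly why one must restrict to $\lambda \in (\lambda_-,\lambda_+)$ with $\mathfrak q/q \le \lambda_- < 1 < \lambda_+ \le \mathfrak p/p$: the interval is shaped both by the hypothesis $\mathfrak q/s < 1+\epsilon < \mathfrak p/s$ of Theorem \ref{commutator} and by the need for $|\epsilon| = |(\lambda-1)p|$ to be small enough for the absorption to close. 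Once the commutator is absorbed and \eqref{2} is combined in, inequality \eqref{1} becomes \eqref{3}, which is exactly $\int_{\mathbb X}[\phi]^\lambda \ll \int_{\mathbb X}[\mathfrak f]^\lambda$ in the notation \eqref{[]}.
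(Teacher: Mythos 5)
Your proposal is correct and follows essentially the same argument the paper uses: the test field $\mathfrak v = (\mathfrak a+\alpha)|\mathfrak a+\alpha|^{(\lambda-1)p}$, the split $\mathfrak v = \Pi_+\mathfrak v + \Pi_-\mathfrak v$ with the $(p,q)$-orthogonality killing $\langle\beta,\Pi_+\mathfrak v\rangle$, the three-term decomposition of $\Pi_-\mathfrak v$ exploiting $\Pi_-\alpha = 0$, and the commutator estimate of Theorem~\ref{commutator} whose $|\epsilon|$ factor permits the Young-inequality absorption for $\lambda$ near $1$. You have correctly identified both the structural role of the commutator bound and why the interval $(\lambda_-,\lambda_+)$ is constrained simultaneously by the hypothesis of Theorem~\ref{commutator} and by the smallness of $|\epsilon| = (\lambda-1)p$ needed to close the absorption.
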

\subsection{Acceptable solutions} We shall speak of   $\,(\lambda_- \,,\,\lambda _+ )\,$  as the \textit{fundamental interval} for the equation (\ref{general equation}), see Definition \ref{fundamental interval} for clarification  of this notion. The estimate (\ref{3}) raises an interesting question. Suppose we are given $\,\mathfrak f = (\mathfrak a\,,\mathfrak b )\,$ such that $\,\int_\mathbb X \,[\mathfrak f ]^\lambda\, < \infty\,$,  one might consider solutions $\,\phi = (\alpha\,,\beta)\,$ of the equation (\ref{general equation}) satisfying  $\,\int_\mathbb X [\phi\,]^\tau  < \infty\,$, for some $\, \tau \neq \lambda$. We assume that both exponents $\,\lambda\,$ and $\,\tau\,$ lie in the fundamental interval.  Is it true that  $\,\int_\mathbb X [\phi\,]^\lambda  < \infty\,$ and, therefore, the estimate (\ref{3}) holds? In general the answer to this question is "yes". However, the proof requires somewhat more elaborate variants of the power type commutators and the associated estimates, like in Theorem \ref{commutator}. The interested reader may wish to consult \cite{CGI} for such commutators to verify the above statement. This affirmative answer also settles the case $\, \lambda = 1\,$; that is, when the data $\,\mathfrak f = (\mathfrak a , \mathfrak b\,)\,$ belongs to the natural setting of the equation. The term \textit{acceptable solution} refers to a pair $\,\phi = (\alpha\,,\beta)\,$ satisfying Equation (\ref{general equation}) such that $\,\int_\mathbb X [\phi\,]^\tau  < \infty\,$, for some  $\,\tau \in (\lambda_-\,, \lambda_+)\,$.  Thus, within the fundamental interval for $\,\tau\,$, if the data  $\,[\,\mathfrak f\,]\,$ belongs to $\,\mathscr L^1(\mathbb X)\,$, then the acceptable solutions actually belong to the natural setting  of the equation. This means that $\,[\phi\,] \in \mathscr L^1(\mathbb X)\,$ and, in particular, such solutions are unique.

\subsection{A nonlinear counterpart of the Riesz Transforms}
Associated with each system (\ref{general equation}) is its operator
$$\,\mathfrak R \,;\;\mathscr L^p(\mathbb X)\times \mathscr L^q (\mathbb X) \rightarrow  \mathscr L^p(\mathbb X)\times \mathscr L^q (\mathbb X)\, ,\quad \textnormal{defined by}\;\;\;\;\mathfrak R \mathfrak f = \,\phi$$
We regard $\,\mathfrak R\,$ as a counterpart of the classical Riesz Transforms in $\,\mathscr L^2(\mathbb R^n, \mathbb V)\,$.
Actual extension of the domain of definition of $\,\mathfrak R\,$ may be accomplished based on estimates of the acceptable solutions to the system (\ref{general equation}). Such an approach is commonly realized by first estimating the operator in a conveniently chosen dense subspace of its natural domain of definition, and then extending it in accordance with the estimate.  We shall work with the following dense subspace

$$ \mathscr L^\ast(\mathbb X) \times \mathscr L^\ast(\mathbb X) \subset \mathscr L^p(\mathbb X) \times \mathscr L^q(\mathbb X) \;,\;\;\textnormal {where}\;\;\;\mathscr L^\ast(\mathbb X)\,=\, \bigcap_{1\leqslant s \,\leqslant \infty} \mathscr L^s(\mathbb X\,, \mathbb V) $$

\section{Marcinkiewicz Interpolation in a Nonlinear Context}
This section takes on the Marcinkiewicz interpolation theorem to the context of nonlinear equations (\ref{general equation}). The idea of decomposing and integrating functions over their level sets  is the core of the matter.  In our nonlinear setting, however,  one faces additional challenges because of insufficient additivity properties of the solutions to (\ref{general equation}); these properties proved very proficient in case of the linear operators.
\begin{definition}\label{weak type}
Whenever $\,\mathfrak q/q\;\leqslant\;\lambda\;\leqslant \;\mathfrak p/p\,$, the operator $\,\mathfrak R\,$ will be said to satisfy the \textit{weak $\lambda$-type} inequality  if
$$
 \textnormal{meas}\{\,x\,;\, [\,\mathfrak R\mathfrak f (x)\,] \; >\; t\;\} \; \ll \; \frac{1}{t^\lambda}\,\int _\mathbb X [\,\mathfrak f\, ] ^\lambda \;, \quad \textnormal{for every}\;\;\ t> 0
$$
Here the implied constant is independent of $\,\mathfrak f =(\mathfrak a , \mathfrak b)\, \in  \mathscr L^\ast(\mathbb X) \times \mathscr L^\ast(\mathbb X)\,$. Thus, in particular, the induced solution $\,\mathfrak R\mathfrak f = (\alpha,\beta) \in  \mathscr L^p(\mathbb X) \times \mathscr L^q(\mathbb X)\,$ is implicitly assumed to  belong to $\,\mathscr L^{\lambda p}(\mathbb X) \times \mathscr L^{\lambda q}(\mathbb X)\,$ as well.
\end{definition}
\subsection{The main result}
The following generalization of Marcinkiewicz Theorem turns out particularly useful when applied to the $\,p$-harmonic type PDEs..
\begin{theorem}\label{Marcinkiewicz}
Let $\,\lambda_- \,$ and $\,\lambda _+\,$ be exponents, $\, \mathfrak q/q \,\leqslant\lambda_ -\,< 1 \,<\lambda_+ \leqslant \mathfrak p/p\,$, for which  $\,\mathfrak R\,$ is both of weak $\,\lambda_-$\,-type and weak $\,\lambda_+$ \,-type. Then for every $\,\tau \in (\lambda_- \,,\,\lambda _+ )\,$  the operator $\mathfrak R\,$ is of strong $\tau$-type, meaning that
\begin{equation}\label{5}
\int_\mathbb X [\,\mathfrak R \mathfrak f\,]^\tau  \; \ll \; \int_\mathbb X \,[\mathfrak f\, ]^\tau \;\;\;\;\textnormal{ for all }\;\;\;\; \mathfrak f \in \mathscr L^\ast(\mathbb X) \times \mathscr L^\ast(\mathbb X).
\end{equation}
\end{theorem}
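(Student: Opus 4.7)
The strategy is the classical Marcinkiewicz layer-cake argument, with special attention to the one step where the nonlinearity of $\mathfrak{R}$ actually costs us something. The point of departure is
\begin{equation*}
\int_\mathbb{X} [\mathfrak{R}\mathfrak{f}]^\tau \,\textnormal d x \;=\; \tau \int_0^\infty t^{\tau-1}\, \textnormal{meas}\{x \in \mathbb X : [\mathfrak{R}\mathfrak{f}(x)] > t\} \,\textnormal d t,
\end{equation*}
so everything reduces to estimating the distribution function of $[\mathfrak{R}\mathfrak{f}]$ in terms of the two weak-type hypotheses at $\lambda_-$ and $\lambda_+$.

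For each level $t > 0$ I split the data by the size of $[\mathfrak{f}]$: write $\mathfrak{f} = \mathfrak{f}^{(t)} + \mathfrak{f}_{(t)}$ with $\mathfrak{f}^{(t)} := \mathfrak{f}\,\chi_{\{[\mathfrak{f}] > t\}}$ and $\mathfrak{f}_{(t)} := \mathfrak{f}\,\chi_{\{[\mathfrak{f}] \leqslant t\}}$. The first piece lies naturally in the $\lambda_-$-class and the second in the $\lambda_+$-class, and the weak-type hypotheses applied to $\mathfrak{R}\mathfrak{f}^{(t)}$ and $\mathfrak{R}\mathfrak{f}_{(t)}$ give
\begin{equation*}
\textnormal{meas}\{[\mathfrak{R}\mathfrak{f}^{(t)}] > c\,t\} \;\ll\; t^{-\lambda_-}\!\!\int_{\{[\mathfrak{f}]>t\}}\!\! [\mathfrak{f}]^{\lambda_-}\,\textnormal d x, \qquad \textnormal{meas}\{[\mathfrak{R}\mathfrak{f}_{(t)}] > c\,t\} \;\ll\; t^{-\lambda_+}\!\!\int_{\{[\mathfrak{f}]\leqslant t\}}\!\! [\mathfrak{f}]^{\lambda_+}\,\textnormal d x.
\end{equation*}

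The principal obstacle is passing from these two bounds to a bound on $[\mathfrak{R}\mathfrak{f}]$ itself. For a linear operator, $\mathfrak{R}\mathfrak{f} = \mathfrak{R}\mathfrak{f}^{(t)} + \mathfrak{R}\mathfrak{f}_{(t)}$ yields the distributional triangle inequality $\textnormal{meas}\{[\mathfrak{R}\mathfrak{f}] > 2c\,t\} \leqslant \textnormal{meas}\{[\mathfrak{R}\mathfrak{f}^{(t)}] > c\,t\} + \textnormal{meas}\{[\mathfrak{R}\mathfrak{f}_{(t)}] > c\,t\}$ for free. For our nonlinear $\mathfrak{R}$ the analogue must be established by comparing the three solutions directly through the equation (\ref{general equation}): pair the difference $\mathfrak{R}\mathfrak{f} - \mathfrak{R}\mathfrak{f}^{(t)} - \mathfrak{R}\mathfrak{f}_{(t)}$ against its natural dual partner, use the monotonicity and Lipschitz hypotheses on $\mathfrak{A}$ together with the $(p,q)$-orthogonality (\ref{orthogonality}), and invoke the acceptable-solution apparatus of Section~4 --- supplemented if necessary by the power-type commutator estimate of Theorem \ref{commutator} to absorb the cross-terms produced by the truncation $\chi_{\{[\mathfrak{f}]>t\}}$. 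This is the one step that is genuinely harder than in the linear Marcinkiewicz theorem; the rest is bookkeeping.

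Once the distributional comparison is in hand, the remainder is routine Fubini. Since $\tau > \lambda_-$ the inner integral $\int_0^{[\mathfrak{f}(x)]} t^{\tau-1-\lambda_-}\,\textnormal d t$ converges at $0$ to $[\mathfrak{f}(x)]^{\tau-\lambda_-}/(\tau-\lambda_-)$, and since $\tau < \lambda_+$ the integral $\int_{[\mathfrak{f}(x)]}^\infty t^{\tau-1-\lambda_+}\,\textnormal d t$ converges at infinity to $[\mathfrak{f}(x)]^{\tau-\lambda_+}/(\lambda_+-\tau)$. Summing the two contributions yields
\begin{equation*}
\int_\mathbb{X} [\mathfrak{R}\mathfrak{f}]^\tau \,\textnormal d x \;\ll\; \Bigl(\frac{1}{\tau-\lambda_-} + \frac{1}{\lambda_+-\tau}\Bigr)\int_\mathbb{X} [\mathfrak{f}]^\tau \,\textnormal d x,
\end{equation*}
which is exactly (\ref{5}); the constant blows up as $\tau \to \lambda_\pm$, but this is harmless since we only demand strong $\tau$-type on the open interval $(\lambda_-, \lambda_+)$.
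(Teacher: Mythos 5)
Your outline is right at the two ends — the layer-cake formula at the start and the Fubini computation at the finish — and you correctly identify the central difficulty: for nonlinear $\mathfrak R$ there is no distributional triangle inequality coming from $\mathfrak R\mathfrak f = \mathfrak R\mathfrak f^{(t)} + \mathfrak R\mathfrak f_{(t)}$. But the paragraph in which you resolve that difficulty is a gesture, not an argument, and the tools you gesture toward are not the ones that actually work. You propose to ``pair the difference $\mathfrak R\mathfrak f - \mathfrak R\mathfrak f^{(t)} - \mathfrak R\mathfrak f_{(t)}$ against its natural dual partner'' and invoke the commutator estimate of Theorem \ref{commutator}. Neither of these appears in the paper's proof: the commutator theorem is what produces the a priori estimate~(\ref{3}) in Section~4 by complex interpolation, but the proof of Theorem~\ref{Marcinkiewicz} is a self-contained real-variable argument that never touches it; and the paper deliberately avoids a three-way comparison of $\mathfrak R\mathfrak f$, $\mathfrak R\mathfrak f^{(t)}$, $\mathfrak R\mathfrak f_{(t)}$, because such a comparison has no clean form under the structure hypotheses on $\mathfrak A$.

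What the paper actually does is structurally different in three respects, and each is load-bearing. First, it observes that strong $1$-type is already known (Theorem~\ref{existence and uniqueness}), so one only ever needs to interpolate from $1$ to one endpoint at a time — $1<\tau<\lambda_+$ uses only the weak $\lambda_+$-type, $\lambda_-<\tau<1$ only the weak $\lambda_-$-type — rather than using both weak types for a single $\tau$ as in your Fubini step. Second, working with $\mathcal H = \mathfrak f + \mathfrak R\mathfrak f$ and $\mathcal H_t = \mathfrak f_t + \mathfrak R\mathfrak f_t$, it replaces distributional subadditivity by a pointwise estimate (Lemma~\ref{19}) of the form $[\mathcal H] \ll E_t + [\mathcal H_t]$, where the \emph{energy integrand} $E_t = \langle\,\mathcal A - \mathcal A_t \mid \mathcal B - \mathcal B_t\,\rangle \geqslant 0$ is the new object that quantifies the failure of additivity; this estimate compares $\mathcal H$ with $\mathcal H_t$ only, not with $\mathcal H_t + \mathcal H^t$. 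Third, the integral $\int_\mathbb X E_t$ is controlled (Lemma~\ref{energy}) by the $(p,q)$-orthogonality: the cross term $\langle\alpha - \alpha_t \mid \beta - \beta_t\rangle$ has vanishing integral because $\alpha-\alpha_t \in \mathscr L^p_{_+}$ and $\beta-\beta_t\in\mathscr L^q_{_-}$, and what survives is bounded by $\int[\mathfrak f^t] + \int[\mathfrak f^t]^{1/p}[\mathcal H]^{1/q} + \int[\mathfrak f^t]^{1/q}[\mathcal H]^{1/p}$. After Chebyshev and Fubini, the $[\mathcal H]$-terms are absorbed into the left-hand side by H\"older and Young, and that absorption — not a distributional bound on $[\mathfrak R\mathfrak f]$ in terms of the two truncations — is what closes the argument. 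Without the energy device and the orthogonality identity that kills the cross term, the ``bookkeeping'' you defer to does not go through.
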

At this stage  we shall make the following

\begin{definition}[Fundamental Interval] \label{fundamental interval} The largest interval $\, (\lambda_- \,,\,\lambda _+ )\,$, $\;\mathfrak q/q\leqslant\lambda_ -< 1 <\lambda_+ \leqslant \mathfrak p/p\,$, for which  $\,\mathfrak R\,$ acts from $\mathscr L^{\lambda_-}(\mathbb X) \into  \mathscr L_{\textnormal{weak}}^{\lambda_-}(\mathbb X)\,$  and from $\mathscr L^{\lambda_+}(\mathbb X) \into  \mathscr L_{\textnormal{weak}}^{\lambda_+}(\mathbb X)\,$, will be called the \textit{fundamental interval} of the equation (\ref{general equation}).
\end{definition}
\begin{remark}
The point to make here is that in Section 4 we were able to establish the estimate (\ref{3}) only for $\,\lambda's\,$ sufficiently close to 1. In the above  definition, however,  we do not insist on this assumption. Thus Theorem \ref{Marcinkiewicz}\, broaden's estimate (\ref{3}) to be true in the entire fundamental interval.
\end{remark}

\begin{proof}
Let us refresh the equations
$$ \mathfrak A(x, \mathfrak a +\alpha)  =  \mathfrak b + \beta \quad\quad\quad \mathfrak B(x, \mathfrak b +\beta)  =  \mathfrak a + \alpha $$
and remind our notation
$$ \mathfrak{f} =(\mathfrak a ,\mathfrak b )  \quad\quad\quad \phi = ( \alpha , \beta ) = \mathfrak R \mathfrak f \quad\quad \int_\mathbb X \langle \,\alpha(x)\, |\, \beta(x) \,\rangle \;\textrm d x  =  0 $$
For further notational convenience we introduce
$$  \mathcal A =  \mathfrak a +\alpha\,,  \quad    \mathcal B =  \mathfrak b +\beta \;,\;\quad \mathcal H = (\mathcal A,\mathcal B) = \mathfrak f + \mathfrak R \mathfrak f \;, \quad\quad [\,\mathcal H\,] = |\,\mathcal A\,| ^p + \,|\,\mathcal B\,|^q $$
Thus the system of equations abbreviates to:
\begin{equation}\label{new setting}
\mathfrak A(x, \mathcal A)  =  \mathcal B \quad\quad\textnormal{or, equivalently }\quad \quad \mathfrak B(x, \mathcal B)  =  \mathcal A
\end{equation}
and we are reduced to showing that
\begin{equation}
\int_\mathbb X [\,\mathcal H(x)\,]^\tau \,\textnormal d x \;\ll \; \int_\mathbb X [\,\mathfrak f(x)\,]^\tau \,\textnormal d x\;, \quad \textnormal{ for}\;\; \tau \in (\lambda_-\,,\,\lambda_+\,)
\end{equation}
which is certainly true for $\,\tau = 1\,$, so we need only consider $\,\tau\,$ between  1 and $\,\lambda = \lambda _\pm\,$;  precisely, for $\,\tau\,$ satisfying
$$
  0 \leqslant \frac{\tau - 1}{\lambda - 1} \,< \, 1\,,  \quad\,\textnormal{where either}\;\;\lambda = \lambda_- < 1\;\;\;\;\textnormal{or}\;\;\;\;\lambda  = \lambda_ + > 1
$$
We shall demonstrate the proof for $\, 1 < \tau < \lambda = \lambda _+\,$. The case $\,\lambda_- =\lambda < \tau < 1\, $ goes in an exactly similar way, which will be emphasized several times as the proof develops.
We observe that whenever  vector fields $\, \mathcal A\,$ and $ \,\mathcal B \,$ are coupled by the relations \,(\ref{new setting}), they are comparable in the following fashion
$$ |\mathcal A|^p \; \approxeq \langle \,\mathcal A \,|\, \mathfrak A(x, \mathcal A)\,\rangle =\;\langle \,\mathcal A\,| \,\mathcal B\,\rangle \;=\; \langle \,\mathfrak B(x, \mathcal B)\,| \,\mathcal B\,\rangle  \approxeq |\mathcal B|^q$$
\subsection{Marcinkiewicz decomposition}
For $ t\geqslant 0 $ and a given pair $\mathfrak f (x) = (\mathfrak a ,\mathfrak b ) $ we consider a decomposition:
$$  \mathfrak f (x)  =  \mathfrak f ^t(x) + \mathfrak f _t(x) \; $$
where
\[ \mathfrak f ^t(x)  = (\mathfrak a ^t ,\mathfrak b ^ t ) =
\begin{cases} (\mathfrak a ,\mathfrak b ) =\mathfrak f (x) \;, \quad &\textnormal{if} \;\;\; [\,\mathfrak f (x)\,] = |\mathfrak a(x) |^p + |\mathfrak b(x) |^q  > t\\
 0 \quad &\textnormal{if} \;\;\; [\,\mathfrak f (x)\,] = |\mathfrak a(x) |^p + |\mathfrak b(x)  |^q  \leqslant t
\end{cases}
  \]
Similarly
\[ \mathfrak f _t(x)  = (\mathfrak a _t ,\mathfrak b _t ) =
\begin{cases} (\mathfrak a ,\mathfrak b ) =\mathfrak f (x) \;, \quad &\textnormal{if} \;\;\; [\mathfrak f (x) \,] \leqslant t\\
 0 \quad &\textnormal{if} \;\;\; [\mathfrak f (x) \,] > t
\end{cases}
  \]
Then we solve the equations for $ \phi ^{\,t}(x)  = (\alpha ^t ,\beta ^ t ) \in \mathscr L^p_+(\mathbb X)\times \mathscr L^q_-(\mathbb X)$ and $ \phi _{\,t}(x)  = (\alpha _t\, ,\beta _ t ) \in \mathscr L^p_+(\mathbb X)\times \mathscr L^q_-(\mathbb X)$, respectively.
  $$ \mathfrak A(x, \mathfrak a^t +\alpha^t)  =  \mathfrak b^t + \beta^t  \quad\quad\quad  \mathfrak A(x, \mathfrak a_t +\alpha_t)  =  \mathfrak b_t + \beta_t  $$
where
$$ \int _\mathbb X \langle \,\alpha ^t(x)\,|\, \beta ^t(x)\,\rangle \;\textrm d x  = 0  \quad\quad\quad  \int _\mathbb X \langle \,\alpha _t(x)\,|\, \beta _t(x)\,\rangle \;\textrm d x  = 0$$
\textit{Caution.} It is not true in general that  $\,\phi(x) = \phi ^{\,t}(x) + \phi _{\,t}(x)\, $ .

Let us denote
$$  \mathcal A^t =  \mathfrak a^t +\alpha^t \,, \quad     \mathcal B^t =  \mathfrak b^t +\beta^t \quad\textrm{and} \quad   \mathcal A_t =  \mathfrak a_t +\alpha_t\,,  \quad     \mathcal B_t =  \mathfrak b_t +\beta_t$$
$$ \mathcal H_t = (\mathcal A_t,\mathcal B_t) \quad\quad\quad [\,\mathcal H_t\,] = |\,\mathcal A_t\,| ^p + \,|\,\mathcal B_t\,|^q $$
$$ \mathcal H^t = (\mathcal A^t,\mathcal B^t) \quad\quad\quad [\,\mathcal H^t\,] = |\,\mathcal A^t\,| ^p + \,|\,\mathcal B^t\,|^q $$
We shall also introduce the \textit{energy integrands}

$$  E^t(x) = \langle\, \mathcal A \,-\mathcal A^t \,| \, \mathcal B \,-\mathcal B^t\rangle\;\geqslant   0 \,, \quad  E_t(x) = \langle\, \mathcal A \,-\mathcal A_t \,| \, \mathcal B \,-\mathcal B_t\rangle\;\geqslant   0  $$

\begin{lemma}[point-wise inequalities]\label{19}
We have
\begin{equation}\label{10}
[\mathcal H  -\mathcal H_t ]  \; \ll\; E_t  \; + [\mathcal H ]\;\;\;\;\;\;\textnormal{and}\;\;\;\;\,[\mathcal H  -\mathcal H^t ]  \; \ll\; E^t  \; + [\mathcal H\,]
\end{equation}
\begin{equation}\label{11}
[\mathcal H \,-\,\mathcal H_t\,]  \; \ll\; E_t  \; + [\mathcal H_t\, ]\;\;\;\;\;\;\textnormal{and}\;\;\;\;\,[\mathcal H\,-\,\mathcal H^t ]  \; \ll\; E^t  \; + [\mathcal H^t\,]
\end{equation}

\end{lemma}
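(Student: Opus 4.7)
The plan is to exploit that both the full pair $(\mathcal A, \mathcal B)$ and the truncated pair $(\mathcal A_t, \mathcal B_t)$ satisfy the same system $\mathfrak A(x,\cdot) = \cdot\,$, so that $\mathcal B - \mathcal B_t = \mathfrak A(x,\mathcal A) - \mathfrak A(x,\mathcal A_t)$ and the energy integrand can be rewritten as
$$
 E_t \;=\; \langle\, \mathfrak A(x,\mathcal A) - \mathfrak A(x,\mathcal A_t)\,|\, \mathcal A - \mathcal A_t\,\rangle.
$$
The monotonicity hypothesis then yields the core lower bound $E_t \,\gg\, (|\mathcal A| + |\mathcal A_t|)^{p-2}|\mathcal A - \mathcal A_t|^2$, while the Lipschitz hypothesis provides a matching upper bound for $|\mathcal B - \mathcal B_t|$. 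The natural split is according to the sign of $p-2$.

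In the case $p \geq 2$, the inequality $|\mathcal A - \mathcal A_t| \leq |\mathcal A| + |\mathcal A_t|$ immediately upgrades the monotonicity bound to $|\mathcal A - \mathcal A_t|^p \ll E_t$. The Lipschitz bound, combined with the H\"older identity $(p-2)q = p-q$, gives
$$
 |\mathcal B - \mathcal B_t|^q \,\ll\, (|\mathcal A| + |\mathcal A_t|)^{p-q}\,|\mathcal A - \mathcal A_t|^q.
$$
Young's inequality with conjugate exponents $\frac{p}{p-q}$ and $\frac{p}{q}$ (positive since $p \geq q$) splits the right-hand side into $(|\mathcal A|+|\mathcal A_t|)^p + |\mathcal A - \mathcal A_t|^p \ll |\mathcal A|^p + |\mathcal A_t|^p + E_t$. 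Summing the two pieces of $[\mathcal H - \mathcal H_t]$ gives the intermediate bound
$$
 [\mathcal H - \mathcal H_t] \,\ll\, E_t + |\mathcal A|^p + |\mathcal A_t|^p,
$$
from which (\ref{10}) follows by the triangle estimate $|\mathcal A_t|^p \ll |\mathcal A|^p + |\mathcal A - \mathcal A_t|^p \ll |\mathcal A|^p + E_t$, and (\ref{11}) by the symmetric estimate $|\mathcal A|^p \ll |\mathcal A_t|^p + E_t$.

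The case $p < 2$ runs in a completely parallel way, using $\mathcal A - \mathcal A_t = \mathfrak B(x, \mathcal B) - \mathfrak B(x, \mathcal B_t)$ and the fact that $\mathfrak B$ satisfies the same axioms as $\mathfrak A$ but with the H\"older-conjugate exponent $q > 2$ in place of $p$; one first obtains $|\mathcal B - \mathcal B_t|^q \ll E_t$ from monotonicity of $\mathfrak B$ and then controls $|\mathcal A - \mathcal A_t|^p$ via the dual Young argument (with exponents $\frac{q}{q-p}$ and $\frac{q}{p}$). The superscripted inequalities are obtained by the same argument applied to $(\mathcal A^t, \mathcal B^t)$, since nothing in the derivation uses the specific shape of the Marcinkiewicz truncation, only that each truncated datum produces a pair satisfying the same system (\ref{new setting}). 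The main technical nuisance is keeping the case split $p \gtrless 2$ transparent: in each case one must apply monotonicity to the variable with the \emph{larger} natural exponent, so that the sign of $p-2$ (respectively $q-2$) lets one collapse $(|\cdot| + |\cdot|)^{p-2}|\cdot - \cdot|^2$ down to $|\cdot - \cdot|^p$, and then use Young's inequality to redistribute the mismatched powers between the absorbing term $[\mathcal H]$ (or $[\mathcal H_t]$) and the gain term $E_t$.
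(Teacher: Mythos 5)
Your argument is correct, but the route differs from the paper's in two ways worth noting. The paper never invokes the Lipschitz condition in this lemma: it starts from the single elementary vector inequality
\[
  |\,\mathcal A-\mathcal A_t\,|^p \;\ll\; |\,\mathcal A-\mathcal A_t\,|^2\,(|\mathcal A|+|\mathcal A_t|)^{p-2} \;+\; \begin{cases} |\mathcal A|^p\\ |\mathcal A_t|^p\end{cases}
\]
(valid uniformly for all $1<p<\infty$, with no case split in the statement), feeds the first term into the monotonicity hypothesis to reach $\langle\,\mathcal A-\mathcal A_t\,|\,\mathcal B-\mathcal B_t\,\rangle = E_t$, and then repeats the identical argument for $|\,\mathcal B-\mathcal B_t\,|^q$ using the monotonicity of $\mathfrak B$ with exponent $q$; both inequalities \eqref{10} and \eqref{11} fall out simultaneously from the ``either/or'' in the additive term. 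You instead split on $p\gtrless 2$, apply monotonicity to only the variable with the larger exponent, and control the other piece via Lipschitz plus Young, after which a triangle-inequality postprocessing step converts your intermediate bound $E_t+|\mathcal A|^p+|\mathcal A_t|^p$ into the two required forms. Both proofs go through; the paper's formulation is more symmetric and economizes on hypotheses within this lemma (monotonicity only, though of course the monotonicity of $\mathfrak B$ is itself a consequence of both structure conditions on $\mathfrak A$), while yours makes the $p\gtrless 2$ dichotomy and the role of the Lipschitz bound explicit. One small technical remark: at the boundary $p=q=2$ your Young exponent $p/(p-q)$ degenerates, but there the Lipschitz bound gives $|\,\mathcal B-\mathcal B_t\,|\ll|\,\mathcal A-\mathcal A_t\,|$ directly, so no harm is done.
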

\begin{proof} For all exponents $\, 1 < p < \infty\, $, we can write

$$
 | \mathcal A  - \mathcal A_t |^p \ll |\,\mathcal A - \mathcal A_t\,|^2 (\,|\mathcal A| + |\mathcal A_t|)^{p-2} + \begin{cases} \textnormal{either}\;\;\;|\mathcal A\,|^ p\\
 \;\textnormal{or}\;\;\;|\mathcal A_t\,|^ p
\end{cases}
$$
$$
\ll \langle\, \mathcal A \,-\mathcal A_t \,\,| \,\, \mathfrak A(x,\mathcal A) \,-\mathfrak A(x,\mathcal A_t\,)\rangle\,\;\; + \begin{cases} \textnormal{either}\;\;\;[\,\mathcal H \,]\\
 \;\textnormal{or}\;\;\;[\,\mathcal H_t\,]
\end{cases}
$$
$$
\ll \langle\, \mathcal A \,-\mathcal A_t \,\,| \,\, \mathcal B \,-\,\mathcal B_t\,\rangle\,\;\; + \begin{cases} \textnormal{either}\;\;\;[\,\mathcal H \,]\\
 \;\textnormal{or}\;\;\;[\,\mathcal H_t\,]
\end{cases}
\;\;=\;\; E_t \;+\; \begin{cases} \textnormal{either}\;\;\;[\,\mathcal H \,]\\
 \;\textnormal{or}\;\;\;[\,\mathcal H_t\,]
 \end{cases}
$$

Similarly, for the exponent $\, 1 < q < \infty\,$,

$$
 | \mathcal B  - \mathcal B_t |^q \ll |\,\mathcal B - \mathcal B_t\,|^2 (\,|\mathcal B| + |\mathcal B_t|)^{q-2} + \begin{cases} \textnormal{either}\;\;\;|\mathcal B\,|^ q\\
 \;\textnormal{or}\;\;\;|\mathcal B_t\,|^ q
\end{cases}
$$
$$
\ll \langle\, \mathcal B \,-\mathcal B_t \,\,| \,\, \mathfrak B(x,\mathcal B) \,-\mathfrak B(x,\mathcal B_t\,)\rangle\,\;\; + \begin{cases} \textnormal{either}\;\;\;[\,\mathcal H \,]\\
 \;\textnormal{or}\;\;\;[\,\mathcal H_t\,]
\end{cases}
$$
$$
\ll \langle\, \mathcal B \,-\mathcal B_t \,\,| \,\, \mathcal A \,-\,\mathcal A_t\,\rangle\,\;\; + \begin{cases} \textnormal{either}\;\;\;[\,\mathcal H \,]\\
 \;\textnormal{or}\;\;\;[\,\mathcal H_t\,]
\end{cases}
\;\;=\;\; E_t \;+\; \begin{cases} \textnormal{either}\;\;\;[\,\mathcal H \,]\\
 \;\textnormal{or}\;\;\;[\,\mathcal H_t\,]
 \end{cases}
$$

Adding up the above inequalities we conclude with the desired estimate corresponding to the lower subscript $\, t > 0\,$.

$$
[\mathcal H  -\mathcal H_t ] = | \mathcal A  - \mathcal A_t |^p \;+\; | \mathcal B  - \mathcal B_t |^q \;\ll\quad\; E_t \;+\; \begin{cases} \textnormal{either}\;\;\;[\,\mathcal H \,]\\
 \;\textnormal{or}\;\;\;[\,\mathcal H_t\,]
 \end{cases}
$$
In an exactly the same way we derive the inequalities for the  upper superscript $t > 0\,$.
\end{proof}
Now, the $(p,q)$-orthogonality comes into play when integrating the truncated energy functions $\, E^t\, $ and $\, E_t\,$,
\begin{lemma}[The energy estimates] \label{energy} We have,
$$
 \mathscr E^t \mathfrak f := \int_\mathbb X E^t(x) \; \textnormal d x \; \ll \;\int_\mathbb X [\,\mathfrak f_t\,] +  \int_\mathbb X [\,\mathfrak f_t\,]^{\frac{1}{p}} \,[\,\mathcal H \,]^{\frac{1}{q}}\; +\; \int_\mathbb X [\,\mathfrak f_t\,]^{\frac{1}{q}} \,[\,\mathcal H \,]^{\frac{1}{p}}
$$
Similarly,
$$
 \mathscr E _t\mathfrak f  := \int_\mathbb X E_t(x) \; \textnormal d x  \;\ll \;\int_\mathbb X [\,\mathfrak f^t\,] +  \int_\mathbb X [\,\mathfrak f^t\,]^{\frac{1}{p}} \,[\,\mathcal H \,]^{\frac{1}{q}}\;+\;  \int_\mathbb X [\,\mathfrak f^t\,]^{\frac{1}{q}} \,[\,\mathcal H \,]^{\frac{1}{p}}
$$
\end{lemma}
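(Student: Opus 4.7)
The plan is to use the $(p,q)$-orthogonality relation (\ref{orthogonality}) to collapse $\mathscr E^t \mathfrak f$ into a manageable sum involving only the ``small'' data pieces $\mathfrak a_t, \mathfrak b_t$, then to bound those pieces point-wise by means of the estimate (\ref{10}) of Lemma~\ref{19} together with a weighted Young inequality, and finally to absorb a small multiple of $\int E^t$ back into the left-hand side.

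First I will decompose $\mathcal A - \mathcal A^t = \mathfrak a_t + (\alpha-\alpha^t)$ and $\mathcal B - \mathcal B^t = \mathfrak b_t + (\beta-\beta^t)$, expand the bilinear form defining $E^t$, and observe that the ``hard'' cross term $\int\langle \alpha-\alpha^t\,|\,\beta-\beta^t\rangle$ vanishes by (\ref{orthogonality}), since $\alpha-\alpha^t\in\mathscr L^p_+(\mathbb X)$ and $\beta-\beta^t\in\mathscr L^q_-(\mathbb X)$. What remains is the identity $\int_\mathbb X E^t = \int\langle\mathfrak a_t|\mathfrak b_t\rangle + \int\langle\mathfrak a_t|\beta-\beta^t\rangle + \int\langle\alpha-\alpha^t|\mathfrak b_t\rangle$. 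Substituting $\beta-\beta^t = (\mathcal B-\mathcal B^t)-\mathfrak b_t$ in the middle summand produces a cancellation with the first summand, collapsing the identity to
\[
\int_\mathbb X E^t \;=\; \int_\mathbb X \langle\, \mathfrak a_t \,|\, \mathcal B-\mathcal B^t\,\rangle \;+\; \int_\mathbb X \langle\, \alpha-\alpha^t \,|\, \mathfrak b_t\,\rangle.
\]

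Next I will invoke the point-wise bound (\ref{10}) from Lemma~\ref{19}: $|\mathcal A-\mathcal A^t|^p+|\mathcal B-\mathcal B^t|^q \ll E^t + [\mathcal H]$. Extracting the $p$-th and $q$-th roots and using the triangle inequality $|\alpha-\alpha^t|\leq|\mathcal A-\mathcal A^t|+|\mathfrak a_t|$ together with the trivial $|\mathfrak a_t|\leq [\mathfrak f_t]^{1/p}$ and $|\mathfrak b_t|\leq [\mathfrak f_t]^{1/q}$, each summand above splits point-wise into pieces of the form
\[
[\mathfrak f_t]^{1/p}(E^t)^{1/q} \,+\, [\mathfrak f_t]^{1/p}[\mathcal H]^{1/q}\quad\text{and}\quad [\mathfrak f_t]^{1/q}(E^t)^{1/p} \,+\, [\mathfrak f_t]^{1/q}[\mathcal H]^{1/p} \,+\,[\mathfrak f_t].
\]
The terms involving $[\mathcal H]$ are exactly the target cross products, and the terms involving $E^t$ are controlled by the weighted Young inequality $ab\ll \epsilon\,a^q + C_\epsilon\,b^p$ (and its $p\leftrightarrow q$ dual) to give $\ll \epsilon\,E^t + C_\epsilon[\mathfrak f_t]$. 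After integration and choosing $\epsilon$ small enough to absorb the $\epsilon\int E^t$ into the left-hand side, the desired estimate for $\mathscr E^t\mathfrak f$ drops out. The companion estimate for $\mathscr E_t\mathfrak f$ follows by the same argument with the roles of the upper and lower $t$ interchanged, invoking the twin half of (\ref{10}).

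The principal obstacle is exactly the \emph{Caution} flagged in the excerpt: since $\phi(x)\neq\phi^t(x)+\phi_t(x)$ in general, one cannot split $E^t$ additively in the naive way. The proof hinges on the observation that, although $\mathfrak R$ itself is nonlinear, the underlying \emph{linear} $(p,q)$-orthogonality still applies to the differences $\alpha-\alpha^t$ and $\beta-\beta^t$ and annihilates the only truly obstructive bilinear term, reducing the estimate to a self-referential inequality in $\int E^t$ that closes by absorption against the algebraic bound of Lemma~\ref{19}.
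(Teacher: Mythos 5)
Your proof is correct and follows essentially the same route as the paper: decompose $\mathcal A - \mathcal A^t$ and $\mathcal B - \mathcal B^t$, kill the $\langle\alpha-\alpha^t\,|\,\beta-\beta^t\rangle$ term by $(p,q)$-orthogonality, bound the remaining bilinear terms via the point-wise inequalities of Lemma~\ref{19}, and absorb $E^t$ by Young's inequality. The only difference is cosmetic bookkeeping: you substitute $\beta-\beta^t=(\mathcal B-\mathcal B^t)-\mathfrak b_t$ in one term and bound $|\alpha-\alpha^t|\leq|\mathcal A-\mathcal A^t|+|\mathfrak a_t|$ in the other, whereas the paper symmetrizes both terms and carries the orthogonal integrand to the very end, but the estimates land in the same place.
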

\begin{proof} Since $\, \mathfrak a - \mathfrak a^t = \mathfrak a _t \,$  and  $\, \mathfrak b - \mathfrak b ^t = \mathfrak b _t \,$, we can write
\[\begin{split} E^t &= \langle\, \mathcal A \,-\mathcal A^t \,\,| \,\, \mathcal B \,-\mathcal B^t\,\rangle\; = \langle\,\, \mathfrak a_t + \alpha - \alpha ^t  \,\,\,| \,\,\, \mathfrak b_t  +  \beta - \beta^t \,\rangle\;=  \\& \;  - \, \langle \,\mathfrak a_t \,\,| \,\,\mathfrak b_t\,\rangle \;+\; \langle\,\mathfrak a_t \,\, | \,\,\mathcal B \,-\mathcal B^t\,  \rangle +    \langle\,\mathfrak b_t \,\, | \,\,\mathcal A \,-\mathcal A^t\,\rangle   +
\langle\,\alpha - \alpha ^t \,\, |\, \,\beta - \beta^t \;\rangle\;\;\ll\; \\& \; [\,\mathfrak f_t\,]\;+\;\;[\,\mathfrak f_t\,]^{\frac{1}{p}} ( \,E^t\,+ [\,\mathcal H\,]\,) ^{\frac{1}{q}}\;+\; [\,\mathfrak f_t\,]^{\frac{1}{q}} (\,E^t\,+\,[\,\mathcal H\,]\,) ^{\frac{1}{p}}\;+\; \langle\,\alpha - \alpha ^t \, |\, \,\beta - \beta^t \,\rangle\;
\end{split}\]
 In the last step we have used inequalities in Lemma \ref{19}.
Then, with the aid of Young's inequality, the term $\,E^t\,$ can be absorbed by the left hand side.
$$
E^t \;\ll\; [\,\mathfrak f_t\,]\;+\;\;[\,\mathfrak f_t\,]^{\frac{1}{p}} ( \, [\,\mathcal H\,]\,) ^{\frac{1}{q}}\;+\; [\,\mathfrak f_t\,]^{\frac{1}{q}} (\,[\,\mathcal H\,]\,) ^{\frac{1}{p}}\;+\; \langle\,\alpha - \alpha ^t \, |\, \,\beta - \beta^t \,\rangle\;
$$
\vskip0.3cm
Since $ \alpha - \alpha ^t \in \mathscr L^p_+(\mathbb X)$ \; and \; $ \beta - \beta ^t \in \mathscr L^q_-(\mathbb X)\,,$ the integral of the last term vanishes. Hence, integrating over $\,\mathbb X\,$ yields the first inequality of the lemma. The second one is proven in much the same way.
\end{proof}
Now  the assumption that the operator $\,\mathfrak R\,$ is $\mathscr L^\lambda_{\textrm{weak}}$-type comes into play.
\begin{lemma}

Let the exponent $\tau$ lie between 1 and $\lambda$; that is, $ 0 < \frac{\tau - 1}{\lambda - 1}<1 $. Then
\begin{equation}\label{-}
\int_0^\infty  t^{\tau-1} \textnormal{meas}\{ x ;\; [\,\mathcal H_t(x) \,] \,> t \}\,\textnormal d t \;\ll \int_\mathbb X [\mathfrak f\,]^\tau \quad \textnormal{if}\;\; 1 < \tau < \lambda = \lambda_+
\end{equation}
and
\begin{equation}\label{^}
\int_0^\infty  t^{\tau-1} \textnormal{meas}\{ x ; \;[\,\mathcal H^t(x)\, ] \,> t \}\,\textnormal d t \;\ll \int_\mathbb X [\mathfrak f\,]^\tau \quad \textnormal{if}\;\; \lambda_- = \lambda < \tau < 1
\end{equation}
\end{lemma}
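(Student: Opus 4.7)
The strategy is the classical Marcinkiewicz pairing of data truncation with the weak-type hypothesis, followed by a Fubini interchange. I will describe the case $1 < \tau < \lambda = \lambda_+$; the case $\lambda_- = \lambda < \tau < 1$ is entirely parallel, with $\mathfrak f^t$ replacing $\mathfrak f_t$ and the inner $t$-integral running over $(0,[\mathfrak f(x)]]$ instead of $[[\mathfrak f(x)],\infty)$.

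First I would control the level set of $[\mathcal H_t]$ by that of $[\mathfrak R \mathfrak f_t]$. Since $\mathcal H_t = \mathfrak f_t + \mathfrak R\mathfrak f_t$, convexity of $|\cdot|^p$ and $|\cdot|^q$ yields $[\mathcal H_t] \ll [\mathfrak f_t] + [\mathfrak R\mathfrak f_t]$ pointwise. By the construction of the truncation, $[\mathfrak f_t]\leqslant t$ everywhere, so for a suitable absolute constant $K$ the set $\{[\mathcal H_t] > Kt\}$ is contained in $\{[\mathfrak R\mathfrak f_t] > t\}$. The weak-$\lambda_+$ hypothesis, applied to $\mathfrak f_t \in \mathscr L^\ast(\mathbb X)\times \mathscr L^\ast(\mathbb X)$, then gives
$$
\textnormal{meas}\{x \,;\, [\mathcal H_t(x)] > t\} \;\ll\; \frac{1}{t^{\lambda_+}} \int_\mathbb X [\mathfrak f_t]^{\lambda_+} \,\textnormal d x.
$$

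Next I would substitute into the left-hand side of (\ref{-}), observe that $[\mathfrak f_t(x)]^{\lambda_+} = [\mathfrak f(x)]^{\lambda_+}\,\chi_{\{[\mathfrak f(x)]\leqslant t\}}$, and apply Fubini:
$$
\int_0^\infty t^{\tau-1}\,\textnormal{meas}\{[\mathcal H_t]>t\}\,\textnormal d t \;\ll\; \int_\mathbb X [\mathfrak f(x)]^{\lambda_+} \int_{[\mathfrak f(x)]}^\infty t^{\tau-1-\lambda_+}\,\textnormal d t \,\textnormal d x.
$$
Since $\tau - 1 - \lambda_+ < -1$, the inner integral converges and equals $[\mathfrak f(x)]^{\tau-\lambda_+}/(\lambda_+ - \tau)$. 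Combining with the factor $[\mathfrak f]^{\lambda_+}$ produces exactly $(\lambda_+ - \tau)^{-1}\int_\mathbb X [\mathfrak f]^\tau$, which is the desired estimate. For the companion case, the same Fubini step yields an inner integral on $(0,[\mathfrak f(x)]]$, which converges precisely because $\tau > \lambda_-$ makes $t^{\tau-1-\lambda_-}$ integrable at the origin.

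The only delicate point is the admissibility of the weak-type bound applied to $\mathfrak f_t$ (or $\mathfrak f^t$). Because $|\mathfrak a_t|\leqslant |\mathfrak a|$, $|\mathfrak b_t|\leqslant |\mathfrak b|$ and in addition $[\mathfrak f_t]\leqslant t$, the truncations both inherit integrability from $\mathfrak f$ and are globally bounded, hence remain in $\mathscr L^\ast(\mathbb X)\times \mathscr L^\ast(\mathbb X)$, so the weak-$\lambda_\pm$ hypothesis applies verbatim. A minor loose end is checking that the contribution of $\{[\mathfrak f_t] > t/K\}$ to the level set decomposition is harmless; this is handled by the trivial layer-cake bound $\textnormal{meas}\{[\mathfrak f_t] > t/K\}\leqslant K^{\lambda_+} t^{-\lambda_+}\int [\mathfrak f_t]^{\lambda_+}$, which fits into the same Fubini argument. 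The main conceptual obstacle, compared with the classical linear Marcinkiewicz setup, is that one cannot simply split $\phi = \phi^t + \phi_t$ by linearity; the trick above bypasses this by comparing $[\mathcal H_t]$ to $[\mathfrak R\mathfrak f_t]$ rather than to the excised part of the full solution $\phi$.
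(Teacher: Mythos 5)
Your proposal is correct and takes essentially the same route as the paper: truncate the data at level $t$, pass the weak-$\lambda_\pm$ hypothesis through to $\mathfrak R\mathfrak f_t$ (respectively $\mathfrak R\mathfrak f^t$), and conclude by Fubini. The one cosmetic difference is that you dispose of the $[\mathfrak f_t]$-contribution by noting $[\mathfrak f_t]\leqslant t$ pointwise, whereas the paper treats that term on an equal footing via the weak-$\lambda$ estimate for the identity operator (i.e.\ the layer-cake/Chebyshev bound), picking up a harmless factor of $2$; be aware, though, that your shortcut is available only for the lower truncation $\mathfrak f_t$, so for the companion case $\lambda_-=\lambda<\tau<1$ you must fall back on the layer-cake argument you sketch in your last paragraph, since $[\mathfrak f^t]$ takes values in $\{0\}\cup(t,\infty)$ and is \emph{not} bounded by $t$ --- making that case not quite as ``entirely parallel'' as you claim, though the paper's uniform treatment handles it without change.
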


\begin{proof}
We shall be concerned with the level sets  $\,\{\,x\,;\; \Gamma(x) > c\,t\,\}\,$, where $\,c\,$ will be a constant, again called implied constant, and the parameter $\,t\,$ will run from $\,0\,$ to $\,\infty\,$. This implied constant may alter from line to line, but this will have no effect on the subsequent estimates of the integrals over the entire domain $\,\mathbb X\,$.
Therefore, whenever it is convenient, we shall abbreviate the notation  $\,\{\,x\,;\; \Gamma(x) > c\,t\,\}\,$ to $\,\{\,x\,;\; \Gamma(x) \succ t\,\}\,$.\\ Let us take the case $\,1 < \tau < \lambda = \lambda_+\,$. We begin with the point-wise inequality $\,[\,\mathcal H_t(x)\, ] \,= \, [\,\mathfrak f_t \,+\mathfrak R \mathfrak f_t\,] \,\ll [\,\mathfrak f_t \,] \,+\, [\,\mathfrak R \mathfrak f_t\,]\,$. Hence

$$
 \textnormal{meas}\{ x ; \;[\,\mathcal H_t(x)\, ] \,> t \} \,\ll\, \textnormal{meas}\{ x ; \;[\,\mathfrak f_t(x)\, ] \,\succ t \} \,+\; \textnormal{meas}\{ x ; \;[\,\mathfrak R \mathfrak f_t(x)\, ] \,\succ t \}
$$
Since the identity operator and $\,\mathfrak R\,$ are both of  $\mathscr L^\lambda_{\textrm{weak}}$-type, we can write
\[\begin{split}
&\int_0^\infty  t^{\tau-1} \textnormal{meas}\{ x ;\; [\,\mathcal H_t(x) \,] \,> t \}\,\textnormal d t \;\ll\\&
\int_0^\infty  t^{\tau-1} \textnormal{meas}\{ x ;\; [\,\mathfrak f_t(x) \,] \,\succ t \}\,\textnormal d t \; + \; \int_0^\infty  t^{\tau-1} \textnormal{meas}\{ x ;\; [\,\mathfrak R\mathfrak f_t(x) \,] \,\succ t \}\,\textnormal d t\,\\&\;\ll
\int_0^\infty  t^{\tau-1} \Big( t^{-\lambda} \int_\mathbb X [\,\mathfrak f_t(x)\,]^\lambda \Big)\,\textnormal d t \;\; + \;\; \int_0^\infty  t^{\tau-1} \Big( t^{-\lambda} \int_\mathbb X [\,\mathfrak f_t(x)\,]^\lambda \Big)\,\textnormal d t \;\\&
=\;2 \;\int_0^\infty  t^{\tau-\lambda -1} \Big( \int_{[\,\mathfrak f\,] \leqslant t } [\,\mathfrak f(x)\,]^\lambda \,\textnormal d x\,\Big)\,\textnormal d t\;\, =\;2\,\int_\mathbb X [\,\mathfrak f(x)\,]^\lambda \Big( \int_{[\,\mathfrak f\,]}^\infty t^{\tau - \lambda - 1 } \,\textnormal d t  \Big)\, \textnormal d x\;\\&\;=\; \frac{2}{\lambda - \tau}\int_\mathbb X [\,\mathfrak f(x) \,]^ \tau \textnormal d x\;\ll\; \int_\mathbb X [\,\mathfrak f(x) \,]^ \tau \textnormal d x\
\end{split}\]
as desired. \\The case $\, \lambda_- = \lambda < \tau < 1\,$ is treated in much the same way; it begins  with the point-wise inequality $\,[\,\mathcal H^t(x)\, ] \,= \, [\,\mathfrak f^t \,+\mathfrak R \mathfrak f^t\,] \,\ll [\,\mathfrak f^t \,] \,+\, [\,\mathfrak R \mathfrak f^t\,]\,$. We leave the details to the reader.
\end{proof}
\section{The Interpolation Estimate, proof of Theorem\; \ref{Marcinkiewicz}}
We aim to show that
\begin{equation}
\int_\mathbb X [\,\phi(x)\,]^\tau \,\textnormal d x \;\ll \; \int_\mathbb X [\,\mathfrak f(x)\,]^\tau \,\textnormal d x
\end{equation}
Equivalently,
\begin{equation}
 \int_\mathbb X [\,\mathcal H(x)\,]^\tau \,\textnormal d x \;\;=\;\;\int_\mathbb X [\,\mathfrak f(x)  + \phi(x)\,]^\tau \,\textnormal d x \;\ll \; \int_\mathbb X [\,\mathfrak f(x)\,]^\tau \,\textnormal d x
\end{equation}
Let us discuss in details the case  $ 1< \tau < \lambda $. We make use of (\ref{11}), which yields  $\, [\mathcal H \,]  \; \ll\; E_t  \; + [\mathcal H_t\, ]\,$, and the energy estimate  in Lemma \ref{energy}\,, to obtain

\[\begin{split}
 &\int_\mathbb X \,[\,\mathcal H(x)\,] ^\tau\,\textnormal d x   =  \tau \int_0^\infty t^{\tau -1 } \textnormal{meas} \{ x \;; \; [\,\mathcal H(x)\,]  > t\;\} \;\textnormal d t  \\ & \quad\quad\quad \quad\quad\ll \tau \int_0^\infty t^{\tau -1 } \textnormal{meas} \{ x \;; \; [\,\mathcal H_t(x)\,]  \succ t\;\} \;\textnormal d t
\\& \quad\quad\quad \quad\quad+ \tau \int_0^\infty t^{\tau -1 } \textnormal{meas} \{ x \;; \; [\,E_t(x)\,]  \succ t\;\} \;\textnormal d t  \\ &
\quad\quad\quad \quad\quad\ll \int_\mathbb X [\mathfrak f\,]^\tau   \, +  \int_0^\infty t^{\tau -2 }\mathscr E_t\mathfrak f   \;\textnormal d t   \\& \ll \int_\mathbb X [\mathfrak f\,]^\tau  \; +
 \,  \int_0^\infty t^{\tau -2 }   \Big (   \int_\mathbb X [\,\mathfrak f^t\,] +  \int_\mathbb X [\,\mathfrak f^t\,]^{\frac{1}{p}} \,[\,\mathcal H \,]^{\frac{1}{q}} \;+\; \int_\mathbb X [\,\mathfrak f^t\,]^{\frac{1}{q}} \,[\,\mathcal H \,]^{\frac{1}{p}}\Big ) \,\textnormal d t
 \; \\& \ll\;\int_\mathbb X [\mathfrak f\,]^\tau  \; +
 \,    \int_0^\infty t^{\tau -2 }  \Big(\int_{[\mathfrak f ] > t } [\,\mathfrak f\,]\;+\;[\,\mathfrak f\,]^{\frac{1}{p}} \,[\,\mathcal H \,]^{\frac{1}{q}} \,+\, [\,\mathfrak f\,]^{\frac{1}{q}} \,[\,\mathcal H \,]^{\frac{1}{p}}\, \Big ) \,\textnormal d t \\& =
 \frac{\tau}{\tau -1} \int_\mathbb X [\mathfrak f\,]^\tau  \; + \frac{1}{\tau -1} \int_\mathbb X \Big( [\,\mathfrak f\,]^{\tau - \frac{1}{q}} \,[\,\mathcal H \,]^{\frac{1}{q}}\,+\, [\,\mathfrak f\,]^{\tau - \frac{1}{p}} \,[\,\mathcal H \,]^{\frac{1}{p}}\Big)
\end{split}
\]
The last equality is just an application of Fubini's Theorem. It is at this stage that we may (and will do)  separate $\,[\,\mathcal H\,]\,$ from $\,[\,\mathfrak f\,]\,$ without damaging the subsequent estimates. By H\"{o}lder's inequality it follows that,
\[\begin{split}
 \int_\mathbb X &\,[\,\mathcal H(x)\,] ^\tau\,\textnormal d x \; \ll \;\\&
  \int_\mathbb X [\mathfrak f\,]^\tau  \; +  \Big(\int_\mathbb X [\,\mathfrak f\,]^\tau \Big )^{1 - \frac{1}{\tau q}} \Big( \int _\mathbb X \,[\,\mathcal H \,]^\tau \Big) ^{\frac{1}{\tau q}} \; +  \Big(\int_\mathbb X [\,\mathfrak f\,]^\tau \Big )^{1 - \frac{1}{\tau p}} \Big( \int _\mathbb X \,[\,\mathcal H \,]^\tau \Big) ^{\frac{1}{\tau p}}
\end{split}\]

Finally, with the aid of Young's inequality the term $\,\int _\mathbb X \,[\,\mathcal H \,]^\tau \,$ in the right hand side can be absorbed by the left hand side. It  results in the desired estimate
 $$ \int_\mathbb X \,[\,\mathfrak R\mathfrak f(x)\,] ^\tau\,\textnormal d x \ll \int_\mathbb X \,[\,\mathfrak f(x)\,] ^\tau\,\textnormal d x \;+ \int_\mathbb X \,[\,\mathcal H(x)\,] ^\tau\,\textnormal d x \; \ll \; \int_\mathbb X \,[\,\mathfrak f(x)\,] ^\tau\,\textnormal d x \;$$

We leave it to the reader to verify, in an exactly similar fashion, the case $ \,\lambda < \tau < 1\,$; simply
the subscript $\,t\,$ should be replaced by superscript $\,t\,$.

\end{proof}
Finally, taking $\mathfrak b = 0\,$ in the above estimates  the proof of Theorem \ref{0} goes through with hardly any changes.

\bibliographystyle{amsplain}

\end{document}